\theoremstyle{plain}
\newtheorem{theorem}{Theorem}
\newtheorem{lemma}{Lemma}
\newtheorem*{theo*}{Theorem}
\newtheorem{proposition}{Proposition}
\newtheorem{corollary}{Corollary}
\theoremstyle{definition}
\newtheorem{definition}{Definition}
\newtheorem*{definition*}{Definition}
\newtheorem{example}{Example}
\newtheorem{remark}{Remark}
\def\PP{{\mathbb P}}
\def\KK{{\mathbb K}}
\def\GG{{\mathbb G}}
\def\CC{{\mathbb C}}
\def\gg{\mathfrak{g}}
\def\gl{\mathfrak{gl}}
\def\mm{\mathfrak{m}}
\def\Mat{{\rm Mat}}
\def\Ker{\mathop{\rm Ker}}
\def\Aut{\mathop{\rm Aut}}
\def\GL{\mathop{\rm GL}}
\def\Cl{\mathop{\rm Cl}}
\newcommand{\CO}{{\mathcal{O}}}
\begin{document}
\sloppy
\title[Additive actions on hypersurfaces]
{Additive actions on projective hypersurfaces}
\author{Ivan Arzhantsev}
\thanks{Both authors were partially supported by the Ministry of Education
and Science of Russian Federation, project 8214. The first author was
supported by the Dynasty Foundation, the Simons-IUM Grant, and the RFBR grant
12-01-00704.}
\address{I.A. \ Department of Higher Algebra, Faculty of Mechanics and Mathematics,
Moscow State University, Leninskie Gory 1, GSP-1, Moscow, 119991, Russia}
\address{
National Research University Higher School of Economics,
School of Applied Mathematics and Information Science,
Bolshoi Trekhsvyatitelskiy~3, Moscow, 109028, Russia}
\email{arjantse@mccme.ru}
\author{Andrey Popovskiy}
%\thanks{}
\address{A.P.\ Department of Higher Algebra, Faculty of Mechanics and Mathematics,
Moscow State University, Leninskie Gory 1, GSP-1, Moscow, 119991,
Russia } \email{PopovskiyA@gmail.com}
\date{\today}
\begin{abstract}
By an additive action on a hypersurface $H$ in $\PP^{n+1}$ we mean an
effective action of a commutative unipotent group on $\PP^{n+1}$ which
leaves $H$ invariant and acts on $H$ with an open orbit. Brendan Hassett and Yuri Tschinkel
have shown that actions of commutative unipotent groups on projective spaces
can be described in terms of local algebras with some
additional data. We prove that additive actions on projective hypersurfaces
correspond to invariant multilinear symmetric forms on local algebras.
It allows us to obtain explicit classification results for non-degenerate
quadrics and quadrics of corank one.
\end{abstract}
\subjclass[2010]{Primary 13H, 14L30; \ Secondary 14J50, 15A69}
\keywords{Algebraic variety, unipotent group, multilinear form, projective quadric}
\maketitle
\section*{Introduction}

Let $\KK$ be an algebraically closed field of characteristic zero and
$\GG_a$ the additive group $(\KK,+)$. Consider the commutative unipotent
affine algebraic group $\GG_a^n$. In other words, $\GG_a^n$ is the additive
group of an $n$-dimensional vector space over $\KK$. The first aim of this paper is to survey
recent results on actions of $\GG_a^n$ with an open orbit on projective algebraic
varieties. To this end we include a detailed proof of the
Hassett-Tschinkel correspondence, discuss its corollaries, interpretations,
and related examples. Also we develop the method of Hassett and Tschinkel to
show that the generically transitive actions of the group $\GG_a^n$ on projective hypersurfaces
correspond to invariant multilinear symmetric forms on finite-dimensional local algebras.
This leads to explicit classification results for non-degenerate quadrics and quadrics of corank one.

By an additive action on a variety $X$ we mean a faithful regular action
of the group $\GG_a^n$ on $X$ such that one of the orbits is open in $X$.
The study of such actions was initiated by Brendan Hassett and Yuri Tschinkel~\cite{HT}.
They showed that additive actions on the projective space~$\PP^n$ up to equivalence are
in bijection with isomorphism classes of local algebras of $\KK$-dimension~$n+1$.
In particular, the number of additive actions on $\PP^n$ is finite if and
only if $n\le 5$.

Additive actions on projective subvarieties $X\subseteq\PP^m$ induced by
an action $\GG_a^n\times\PP^m\to\PP^m$ can be described in terms of local
$(m+1)$-dimensional algebras equipped with some additional data. This approach
was used in \cite{Sh} and \cite{ASh} to classify additive actions on
projective quadrics. Elena Sharoiko proved in \cite[Theorem~4]{Sh} that an additive
action on a non-degenerate quadric $Q\subseteq\PP^{n+1}$ is unique up to
equivalence. Recently Baohua Fu and Jun-Muk Hwang~\cite{FH} used a different
technique to show the uniqueness of additive action on a class of Fano
varieties with Picard number $1$. This result shows that
an abundance of additive actions on the projective space should be considered as
an exception.

A variety with a given additive action looks like an "additive analogue"
of a toric variety. Unfortunately, it turns out that two theories have almost
no parallels, see \cite{HT},~\cite{ASh}.

Generalized flag varieties $G/P$ of a semisimple algebraic group $G$ admitting an additive action are classified in \cite{Ar}. Roughly speaking, an additive action on $G/P$
exists if and only if the unipotent radical $P^u$ of the parabolic subgroup $P$ is commutative.
The uniqueness result in this case follows from~\cite{FH}. In particular, it covers the case of Grassmannians and thus answers a question posed in~\cite{ASh}. Another proof
of the uniqueness of additive actions on generalized flag varieties is obtained by
Rostislav Devyatov~\cite{Dev}. It uses nilpotent multiplications on certain finite-dimensional
modules over semisimple Lie algebras.

Evgeny Feigin proposed a construction based on the PBW-filtration to degenerate an arbitrary
generalized flag variety $G/P$ to a variety with an additive action, see \cite{Fe} and
further publications.

In~\cite{DL}, Ulrich Derenthal and Daniel Loughran classified singular del Pezzo surfaces
with additive actions; see also~\cite{DL2}. By the results of~\cite{CLT},
Manin's Conjecture is true for such surfaces.

In this paper we prove that additive actions on projective hypersurfaces of degree $d$
in $\PP^{n+1}$ are in bijection with invariant $d$-linear symmetric forms on
$(n+2)$-dimensional local algebras. The corresponding form is the polarization of
the equation defining the hypersurface. As an application, we give a short proof
of uniqueness of additive action on non-degenerate quadrics and classify
additive actions on quadrics of corank one. The case of cubic projective
hypersurfaces is studied in the recent preprint of Ivan Bazhov~\cite{Ba}.

The paper is orzanized as follows. In Section~\ref{sec1} we define additive actions
and consider the problem of extension of an action $\GG_a^n\times X\to X$ on
a projective hypersurface $X$ to the ambient space $\PP^{n+1}$.
The Hassett-Tschinkel correspondence is discussed in Section~\ref{sec2}.
Section~\ref{sec3} is devoted to invariant multilinear symmetric forms on
local algebras. Our main result (Theorem~\ref{T4}) describes
additive actions on projective hypersurfaces in these terms. Also
we give an explicit formula for an invariant multilinear symmetric form (Lemma~\ref{lem})
and prove that if a hypersurface $X$ in $\PP^{n+1}$ admits an additive action and
the group $\Aut(X)^0$ is reductive, then $X$ is either a hyperplane or
a non-degenerate quadric (Proposition~\ref{last}).
Additive actions on non-degenerate quadrics and
on quadrics of corank one are classified in Section~\ref{sec4} and Section~\ref{sec5}, respectively.

\section{Additive actions on projective varieties}
\label{sec1}

Let $X$ be an irreducible algebraic variety of dimension $n$ and $\GG_a^n$
be the commutative unipotent group.

\begin{definition}
An \emph{inner additive action} on $X$ is an effective action
$\GG_a^n\times X\to X$ with an open orbit.
\end{definition}

It is well known that for an action
of a unipotent group on an affine variety all orbits are closed,
see, e.g., \cite[Section~1.3]{PV}. It implies that if an affine variety $X$
admits an additive action, then $X$ is isomorphic to the group $\GG_a^n$
with the $\GG_a^n$-action by left translations.

In general, the existence of an inner additive action implies that the variety $X$
is rational. For $X$ normal, the divisor class group $\Cl(X)$ is freely generated
by prime divisors in the complement of the open $\GG_a^n$-orbit. In particular,
$\Cl(X)$ is a free finitely generated abelian group.

The most interesting case is the study of
inner additive actions on complete varieties~$X$. In this case an inner
additive action determines a maximal commutative unipotent subgroup of the linear algebraic group $\Aut(X)^0$. Two inner additive actions are said to be \emph{equivalent},
if the corresponding subgroups are conjugate in $\Aut(X)$.

\begin{proposition} \label{spher}
Let $X$ be a complete variety with an inner additive action.
Assume that the group $\Aut(X)^0$ is reductive. Then $X$ is a generalized
flag variety $G/P$, where $G$ is a linear semisimple group and $P$ is a parabolic
subgroup.
\end{proposition}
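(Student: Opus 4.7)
The plan is to identify $G := \Aut(X)^0$ as a connected semisimple group and $X$ as a projective homogeneous $G$-variety. Let $U \cong \GG_a^n \subseteq G$ be the subgroup supplying the additive action. Since $\dim U = \dim X$ and the stabilizer of a point of the open orbit $O_U$ is a $0$-dimensional closed subgroup of the connected unipotent group $U$, hence trivial (in characteristic zero $\GG_a^n$ has no nontrivial finite subgroups), $U$ acts simply transitively on $O_U$. This gives an isomorphism $O_U \cong \AA^n$ of $U$-varieties, so $X$ contains an open affine cell of full dimension.

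The first step is to show that $X$ is $G$-spherical. Any connected unipotent subgroup of a reductive group is contained in the unipotent radical of some Borel, so one may choose a Borel $B \subseteq G$ with $U \subseteq B^u$. Then $B \cdot x \supseteq U \cdot x = O_U$ is open in $X$, i.e., $B$ has an open orbit. The main obstacle, and the crux of the proof, is to upgrade sphericity to $G$-homogeneity. Here I would exploit the structure of the boundary $D := X \setminus O_U$: by the result recalled in Section~\ref{sec1}, applied to the normalisation of $X$ if necessary, the complement $D$ is a divisor whose prime components freely generate the divisor class group, so it records all nontrivial $G$-invariant closed subvarieties. Any proper $G$-invariant closed subvariety $Y$ of $X$ must lie in $D$, and $G$ permutes the components $D_i$. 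Combining the $G$-equivariance with the fact that $U$ acts transitively on $O_U$, I would argue at the tangent level that a $G$-orbit meeting some $D_i$ would force a nontrivial intersection of $\mathrm{Lie}(U)$ with the stabilizer subalgebra at a boundary point, contradicting either the faithfulness of the $U$-action or the dimension count derived from $\dim U = \dim X$. This rules out $Y \neq \emptyset$, so $X$ coincides with its open $G$-orbit $G/H$.

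To conclude, since $X = G/H$ is complete and $G$ is connected, $H$ contains a Borel and is therefore a parabolic subgroup $P$. The connected centre $Z(G)^0$ of the reductive group $G$ lies in every Borel, hence in $P$, so $Z(G)^0$ acts trivially on $X = G/P$. Since $G = \Aut(X)^0$ acts faithfully, $Z(G)^0 = \{e\}$ and $G$ is semisimple. This yields the asserted description $X = G/P$ with $G$ semisimple and $P$ parabolic, with the passage from sphericity to homogeneity being the one genuinely nontrivial ingredient.
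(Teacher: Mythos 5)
Your setup (simple transitivity of $U$ on its open orbit, the choice of a Borel $B$ with $U\subseteq B^u$, hence an open $B$-orbit, and the endgame deducing that $H$ is parabolic and that $Z(G)^0$ must be trivial) is fine and matches the paper in spirit. But the step you yourself identify as ``the one genuinely nontrivial ingredient'' --- passing from sphericity to homogeneity --- is not actually proved, and the argument you sketch for it rests on a false premise. At a boundary point $y\in D$ the orbit $U\cdot y$ lies in the divisor $D$, so $\dim U\cdot y<n$ and $\mathrm{Lie}(U)\cap\mathrm{Lie}(G_y)\neq 0$ \emph{always} holds for an additive action on a complete variety; this contradicts nothing. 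Faithfulness is a global condition on the action, not a condition on point stabilizers (already for $\GG_a$ acting on $\PP^1$ the point at infinity is fixed by all of $\GG_a$), and the dimension count $\dim U=\dim X$ only concerns the open orbit. So there is no contradiction to be extracted from a nonempty $G$-invariant closed subvariety by this route, and indeed sphericity alone cannot give homogeneity: complete toric varieties are spherical, have reductive $\Aut^0$ in many cases, and are not homogeneous. What saves the day is the stronger fact that you have but do not use: $U$ is contained in a \emph{maximal unipotent} subgroup of $G$, which therefore also has an open orbit, so $X$ is spherical of \emph{rank zero}. The real content of the proposition is the theorem that a complete normal spherical variety of rank zero is a generalized flag variety; this is exactly what the paper imports from \cite[Section~1.5.1 and Proposition~10.1]{Ti}, and it cannot be replaced by a tangent-space computation at the boundary.

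A secondary gap: spherical variety theory is a theory of \emph{normal} varieties, while $X$ is only assumed complete. The paper first passes to the normalization $X'$, applies the rank-zero theorem there to get $X'=G/P$ with $\Aut(X)^0$ acting transitively, and only then concludes $X=X'$ (a transitive action forces the finite birational morphism $X'\to X$ to be an isomorphism). Your parenthetical ``applied to the normalisation of $X$ if necessary'' needs to be turned into this actual argument, since the class-group statement you invoke from Section~\ref{sec1} is itself stated only for normal $X$.
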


\begin{proof}
Let $X'$ be the normalization of $X$. Then the action of $\Aut(X)^0$ lifts to $X'$.
By the assumption, some unipotent subgroup of $\Aut(X)^0$ acts on $X'$ with an open orbit.
Then a maximal unipotent subgroup of the reductive group $\Aut(X)^0$ acts on $X'$
with an open orbit. It means that $X'$ is a spherical variety of rank zero,
see~\cite[Section~1.5.1]{Ti} for details.
It yields that $X'$ is a generalized flag variety $G/P$, see
\cite[Proposition~10.1]{Ti}, and $\Aut(X)^0$ acts on $X'$ transitively.
The last condition implies that $X=X'$.
\end{proof}

A classification of generalized flag varieties admitting an inner additive action
is obtained in~\cite{Ar}. In particular, the parabolic subgroup $P$ is maximal in this case.

\begin{definition}
Let $X$ be a closed subvariety of dimension $n$ in the projective space $\PP^m$. Then
an \emph{additive action} on $X$ is an effective action
$\GG_a^n\times\PP^m\to\PP^m$ such that $X$ is $\GG_a^n$-invariant
and the induced action $\GG_a^n\times X\to X$ has an open orbit.
Two additive actions on $X$ are said to be \emph{equivalent}
if one is obtained from another via automorphism of $\PP^m$ preserving $X$.
\end{definition}

Clearly, any additive action on a projective subvariety $X$ induces an inner
additive action on $X$. The converse is not true, i.e., not any action
$\GG_a^n \times X\to X$ with an open orbit on a projective subvariety $X$
can be extended to the ambient space $\PP^m$.

\begin{example}
Consider a subvariety
$$
X=V(x^2z-y^3)\subseteq \PP^2
$$
and a rational $\GG_a$-action on $X$ given by
$$
\left(\frac{y}{x}, \ a\right) \mapsto \frac{y}{x} + a.
$$
Using affine charts one can check that this action is regular.
On the other hand, it can not be extended to $\PP^2$,
because the closure of a $\GG_a$-orbit on $\PP^2$ can not be
a cubic, see Example~\ref{gap2}.
\end{example}

At the same time, if the subvariety $X$ is linearly normal in $\PP^m$ and $X$ is normal,
then an extension of a $\GG_a^n$-action to $\PP^m$ exists. Indeed, the restriction
$L=:\CO(1)|_X$ of the line bundle $\CO(1)$ on $\PP^m$ can be linearized with respect
to the action $\GG_a^n\times X\to X$, see e.g. \cite{KKLV}. The~linearization defines
a structure of a rational $\GG_a^n$-module on the space of sections $H^0(X,L)$.
Since $X$ is linearly normal, the restriction $H^0(\PP^m,\CO(1))\to H^0(X,L)$
is surjective. Consider a vector space decomposition $H^0(\PP^m,\CO(1))=V_1\oplus V_2$,
where $V_1$ is the kernel of the restriction. The complementary subspace $V_2$ projects
to $H^0(X,L)$ isomorphically. This isomorphism induces a structure
of a rational $\GG_a^n$-module on $V_2$. Further,
we regard $V_1$ as the trivial $\GG_a^n$-module.
This gives a structure of a rational $\GG_a^n$-module on
$H^0(\PP^m,\CO(1))$. Since $\PP^m$ is the projectivization of the dual space
to $H^0(\PP^m,\CO(1))$, we obtain a required extended action $\GG_a^n\times \PP^m\to \PP^m$.

\smallskip

From now on we consider additive actions on projective subvarieties $X\subseteq\PP^m$.

%%%%%%%%%%%%%%%%%%%%%%%%%%%%%%%%%%%%%%%%%%%%%%%%%%%%%%%%%%%%%%%%

\section{The Hassett-Tschinkel correspondence}
\label{sec2}

In~\cite{HT} Brendan Hassett and Yuri Tschinkel established
a remarkable correspondence between additive actions on the projective space
$\PP^n$ and local algebras of $\KK$-dimension $n+1$. Moreover, they
described rational cyclic $\GG_a^n$-modules in terms of local algebras.
In this section we recall these results. The proofs given here are taken from~\cite{ASh}.
By a local algebra we always mean a commutative associative local algebra
with unit.

Let $\rho:\GG_a^n\to\GL_{m+1}(\KK)$ be a faithful rational representation.
The differential defines a representation $d\rho:\gg\to\gl_{m+1}(\KK)$ of the tangent algebra $\gg = \text{Lie}(\GG_a^n)$ and the induced representation $\tau:U(\gg)\to\Mat_{m+1}(\KK)$ of the universal enveloping algebra $U(\gg)$. Since the group $\GG_a^n$ is commutative, the algebra $U(\gg)$ is isomorphic to the polynomial algebra
$\KK[x_1,\dots,x_n]$, where  $\gg$ is identified with the subspace $\langle x_1,\dots,x_n\rangle$.
The algebra $R:=\tau(U(\gg))$ is isomorphic to the factor algebra $U(\gg)/\Ker\tau$.
As $\tau(x_1),\ldots,\tau(x_n)$ are commuting
nilpotent operators, the algebra $R$ is finite-dimensional and local. Let us denote by $X_1,\dots, X_n$ the images of the elements $x_1,\dots,x_n$
in~$R$. Then the maximal ideal of $R$ is $\mm:=(X_1,\dots, X_n)$.
The subspace $W:=\tau(\gg)=\langle X_1,\ldots,X_n\rangle$ generates $R$
as an algebra with unit.

Assume that $\KK^{m+1}$ is a cyclic $\GG_a^n$-module with a cyclic vector $v$, i.e., $\langle\rho(\GG_a^n)v\rangle=\KK^{m+1}$.
The subspace $\tau(U(\gg))v$ is $\gg$-  and $\GG_a^n$-invariant; it contains the vector $v$ and therefore coincides with the space $\KK^{m+1}$. Let $I=\{y\in U(\gg)\, : \, \tau(y)v=0\}$. Since the vector $v$ is cyclic, the ideal $I$ coincides with
$\Ker\tau$, and we obtain identifications
$$
R \cong U(\gg)/I \cong \tau(U(\gg))v=\KK^{m+1}.
$$
Under these identifications the action of an element $\tau(y)$ on $\KK^{m+1}$ corresponds to the operator of multiplication by $\tau(y)$ on the factor algebra $R$, and the vector $v\in\KK^{m+1}$ goes to the residue class of unit. Since
$\GG_a^n = \exp(\gg)$, the $\GG_a^n$-action on $\KK^{m+1}$ corresponds to the multiplication by elements of $\exp(W)$ on $R$.

\smallskip

Conversely, let $R$ be a local $(m+1)$-dimensional algebra with a maximal ideal $\mm$, and $W\subseteq \mm$ be a subspace that
generates $R$ as an algebra with unit.
Fix a basis $X_1,\ldots,X_n$ in $W$. Then $R$ admits a presentation $\KK[x_1,\ldots,x_n]/I$,
where $I$ is the kernel of the homomorphism
$$
\KK[x_1,\ldots,x_n]\to R, \, x_i \mapsto X_i.
$$
These data define a faithful representation $\rho$ of the group $\GG_a^n:=\exp(W)$ on the space $R$: the operator $\rho((a_1,\dots,a_n))$ acts as multiplication by the element $\exp(a_1X_1+\dots+a_nX_n)$. Since $W$ generates $R$ as an algebra with unit,
one checks that the representation
is cyclic with unit in $R$ as a cyclic vector.

Summarizing, we obtain the following result.

\begin{theorem} \cite[Theorem~2.14]{HT}. \label{T1}
The correspondence described above establishes a bijection between
\begin{enumerate}
\item[$(1)$] equivalence classes of faithful cyclic rational representations $\rho:\GG_a^n\to\GL_{m+1}(\KK)$;
\item[$(2)$] isomorphism classes of pairs $(R,W)$, where $R$ is a local $(m+1)$-dimensional algebra
with the maximal ideal $\mm$ and $W$ is an $n$-dimensional subspace of $\mm$ that generates~$R$ as an algebra with unit.
\end{enumerate}
\end{theorem}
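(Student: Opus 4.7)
The two constructions in the passage preceding the theorem give maps in both directions; the plan is to verify that they are well defined, mutually inverse, and compatible with the notions of equivalence on each side.

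First I would check the forward construction. Given a faithful cyclic rational representation $\rho$ with cyclic vector $v\in\KK^{m+1}$, the differential $d\rho$ extends uniquely to an algebra homomorphism $\tau\colon U(\fg)\to\Mat_{m+1}(\KK)$. Since $\fg$ is abelian and $d\rho$ has only nilpotent image (because $\rho(\GG_a^n)$ consists of unipotent operators), the $\tau(x_i)$ are pairwise commuting nilpotents, so $R:=\tau(U(\fg))$ is a finite-dimensional commutative algebra in which every element of $\mm:=(X_1,\dots,X_n)$ is nilpotent; hence $R$ is local with that maximal ideal, and $W:=\tau(\fg)\subseteq\mm$ generates $R$ as a unital algebra. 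Cyclicity of $v$ then says the evaluation map $R\to\KK^{m+1}$, $r\mapsto r\cdot v$, is surjective, and since the kernel equals the kernel $I$ of $\tau$ (any element killing $v$ already lies in $\Ker\tau$ because $v$ is cyclic), this map is an isomorphism. Thus $\dim R=m+1$, and the obtained pair $(R,W)$ is of the required type.

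Next I would check the backward construction. Given $(R,W)$, set $X_i$ a basis of $W$ and define $\rho$ on $R$ by $\rho(a_1,\dots,a_n):=\exp(a_1X_1+\dots+a_nX_n)$, which makes sense because the $X_i$ are nilpotent; $\rho$ is a faithful rational representation of $\GG_a^n=\exp(W)$. The unit $1\in R$ is a cyclic vector because $W$ generates $R$ as a unital algebra: the orbit $\rho(\GG_a^n)\cdot 1$ spans $\exp(W)\cdot 1$, whose linear span contains all monomials in the $X_i$, hence all of $R$. Composing the two constructions and identifying $R$ with $\KK^{m+1}$ via $v\leftrightarrow 1$, I would observe that $\tau(x_i)$ acts as multiplication by $X_i$, so we recover the same pair $(R,W)$; conversely, starting from $(R,W)$ and taking the differential of $a\mapsto\exp(\sum a_iX_i)$ returns multiplication by $X_i$, giving back $(R,W)$. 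So the maps are mutually inverse on objects.

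Finally I would treat equivalence. An equivalence between two representations $\rho$ and $\rho'$ on $\KK^{m+1}$ is an element $g\in\GL_{m+1}(\KK)$ with $g\rho(a)g^{-1}=\rho'(a)$ for all $a\in\GG_a^n$. Differentiating gives $gd\rho(\fg)g^{-1}=d\rho'(\fg)$, and hence $g\tau(U(\fg))g^{-1}=\tau'(U(\fg))$, so conjugation by $g$ induces an algebra isomorphism $R\xrightarrow{\sim}R'$ carrying $W$ to $W'$. The cyclic vectors may differ, but after rescaling $g$ one can arrange $g(v)=v'$, at which point conjugation by $g$ corresponds under the identifications $R\cong\KK^{m+1}\cong R'$ to the algebra isomorphism. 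Conversely, an isomorphism $\varphi\colon(R,W)\to(R',W')$ of pairs gives a linear isomorphism $R\to R'$ intertwining the two $\GG_a^n$-actions (because multiplication by $\exp(\sum a_iX_i)$ is transported to multiplication by $\exp(\sum a_i\varphi(X_i))$), so the classes match.

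The main obstacle is the equivalence step: one must be careful that the intertwining operator need not a priori send cyclic vector to cyclic vector, and that locality/cyclicity force the intertwiner to come from an algebra isomorphism rather than merely a linear one; once the identification $\KK^{m+1}\cong R$ given by the cyclic vector is pinned down, this reduces to a bookkeeping check. The remaining verifications (finite-dimensionality and locality of $R$, surjectivity and kernel of the evaluation map, cyclicity of $1$) are direct consequences of the commuting-nilpotent picture explained in the paragraphs preceding the statement.
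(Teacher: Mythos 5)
Your proposal is correct and follows essentially the same route as the paper, which simply packages the two constructions described in the preceding paragraphs (the map $\tau\colon U(\fg)\to\Mat_{m+1}(\KK)$ with $R=\tau(U(\fg))$, $W=\tau(\fg)$, and the inverse construction via multiplication by $\exp(a_1X_1+\dots+a_nX_n)$) into the stated bijection. Your additional care with the equivalence relation --- that conjugation by an intertwiner carries $\tau(U(\fg))$ to $\tau'(U(\fg))$ and hence induces an isomorphism of pairs, independently of where the cyclic vector is sent --- is a point the paper leaves implicit, and your handling of it is sound.
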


\begin{remark} \label{r1}
Let $\rho:\GG_a^n\to\GL_{m+1}(\KK)$ be a faithful cyclic rational representation.
The set of cyclic vectors in $\KK^{m+1}$ is an open orbit of a commutative algebraic
group $C$ with  $\rho(\GG_a^n)\subseteq C \subseteq \GL_{m+1}(\KK)$, and the complement
of this set is a hyperplane. In our notation, the group $C$ is the extension
of the commutative unipotent group $\exp(\mm)\cong\GG_a^m$ by scalar matrices.
\end{remark}

A faithful linear representation $\rho:\GG_a^n\to\GL_{m+1}(\KK)$
determines an effective action of the group $\GG_a^n$ on the projectivization $\PP^m$ of the space $\KK^{m+1}$.
Conversely, let $G$ be a connected affine algebraic group with the trivial Picard group, and $X$ be a normal
$G$-variety. By \cite[Section~2.4]{KKLV}, every line bundle on $X$ admits a $G$-linearization.
Moreover, if $G$ has no non-trivial characters, then a $G$-linearization is unique.
This shows that every effective $\GG_a^n$-action on $\PP^m$ comes from a (unique) faithful rational $(m+1)$-dimensional $\GG_a^n$-module.

An effective $\GG_a^n$-action on $\PP^m$ has an open orbit if and only if $n=m$.
In this case the corresponding $\GG_a^n$-module is cyclic.
It terms of Theorem~\ref{T1} the condition $n=m$ means $W=\mm$, and we obtain the following theorem.

\begin{proposition}\cite[Proposition~2.15]{HT} \label{T2}
There is a one-to-one correspondence between
\begin{enumerate}
\item[$(1)$] equivalence classes of additive actions on $\PP^n$;
\item[$(2)$] isomorphism classes of local $(n+1)$-dimensional algebras.
\end{enumerate}
\end{proposition}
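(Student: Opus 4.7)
The plan is to reduce Proposition~\ref{T2} to Theorem~\ref{T1} by showing that the two extra pieces of structure appearing there---the choice of linear lift and the choice of generating subspace $W\subseteq\mm$---become vacuous in the setting of additive actions on $\PP^n$.

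First I would pass from actions on $\PP^n$ to representations. Applying the result recalled just before the proposition (\cite[Section~2.4]{KKLV}): since $\GG_a^n$ is a connected affine algebraic group with trivial Picard group and no nontrivial characters, every effective action $\GG_a^n\times\PP^n\to\PP^n$ is induced by a \emph{unique} faithful rational representation $\rho:\GG_a^n\to\GL_{n+1}(\KK)$. Two such actions are equivalent (conjugate in $\Aut(\PP^n)=\PGL_{n+1}(\KK)$) if and only if the corresponding representations are conjugate in $\GL_{n+1}(\KK)$ up to a scalar, but scalars lie in the centre and the homothety factor does not affect the underlying module structure; so equivalence classes of actions are in bijection with equivalence classes of faithful representations of dimension $n+1$.

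Second, I would check that having an open $\GG_a^n$-orbit on $\PP^n$ is equivalent to the representation being cyclic. If an orbit $\GG_a^n\cdot[v]$ is open in $\PP^n$, then $\langle\rho(\GG_a^n)v\rangle$ must be all of $\KK^{n+1}$ (otherwise the orbit lies in a proper projective subspace), so $v$ is a cyclic vector. Conversely, by Remark~\ref{r1} the cyclic vectors form an open subset whose projectivization is a single open $\GG_a^n$-orbit on $\PP^n$ (the complementary hyperplane accounts for the non-cyclic vectors). So additive actions on $\PP^n$ are in bijection with equivalence classes of faithful \emph{cyclic} rational representations of $\GG_a^n$ of dimension $n+1$.

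Third, I apply Theorem~\ref{T1} with $m=n$. It produces pairs $(R,W)$ where $R$ is local of $\KK$-dimension $n+1$ with maximal ideal $\mm$, and $W\subseteq\mm$ is an $n$-dimensional generating subspace. But $\dim_\KK\mm=\dim_\KK R-1=n$, so the inclusion $W\subseteq\mm$ forces $W=\mm$; the subspace data is therefore redundant, and the isomorphism class of the pair $(R,W)$ is determined by the isomorphism class of $R$ alone. Combining the three steps yields the desired bijection between equivalence classes of additive actions on $\PP^n$ and isomorphism classes of local $(n+1)$-dimensional algebras. The only step with a real subtlety is the first one, where one must make sure that passing from $\GL_{n+1}$-equivalence to $\PGL_{n+1}$-equivalence does not collapse or refine the classification; this is handled by the fact that scalar matrices rescale the cyclic vector but induce the same algebra isomorphism $R\cong\tau(U(\gg))v$.
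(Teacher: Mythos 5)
Your proposal is correct and follows essentially the same route as the paper: linearize the action via \cite[Section~2.4]{KKLV} to get a unique faithful $(n+1)$-dimensional representation, identify the open-orbit condition with cyclicity, and then apply Theorem~\ref{T1} with $m=n$, where $\dim W=n=\dim\mm$ forces $W=\mm$ and makes the pair $(R,W)$ equivalent to $R$ alone. You are in fact somewhat more careful than the paper on two points it leaves implicit --- the equivalence of cyclicity with the existence of an open orbit (via Remark~\ref{r1}) and the passage from $\GL_{n+1}$-conjugacy to $\PGL_{n+1}$-equivalence --- but these are refinements of the same argument, not a different one.
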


\begin{remark}
It follows from Remark~\ref{r1} that if the group $\GG_a^n$ acts on $\PP^m$ and
some orbit is not contained in a hyperplane, then the action
can be extended to an additive action $\GG_a^m\times\PP^m\to\PP^m$.
It seems that such an extension exists without any extra assumption.
\end{remark}

Given the projectivization $\PP^m$ of a faithful rational $\GG_a^n$-module and a point $x\in\PP^m$ with the trivial stabilizer,
the closure $X$ of the orbit $\GG_a^n\cdot x$ is a projective variety equipped with an additive
action. Closures of generic orbits are hypersurfaces if and only if $n=m-1$. If such a hypersurface is not a hyperplane, then $\PP^m$ comes from the projectivization of a cyclic $\GG_a^n$-module, it is given by a pair $(R,W)$, and the condition $n=m-1$ means that $W$
is a hyperplane in $\mm$. We obtain the following result.

\begin{proposition} \label{T3}
There is a one-to-one correspondence between
\begin{enumerate}
\item[$(1)$] equivalence classes of additive actions on hypersurfaces in $\PP^{n+1}$
of degree at least two;
\item[$(2)$] isomorphism classes of pairs $(R,W)$, where $R$ is a local $(n+2)$-dimensional algebra with the maximal ideal $\mm$ and $W$ is a hyperplane in $\mm$ that generates~$R$ as an algebra with unit.
\end{enumerate}
\end{proposition}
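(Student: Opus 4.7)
The plan is to derive Proposition~\ref{T3} directly from Theorem~\ref{T1} by restricting the Hassett--Tschinkel bijection to those $(n+2)$-dimensional cyclic $\GG_a^n$-modules whose projective orbit closures are hypersurfaces of degree at least two.

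For the forward direction, starting with an additive action on a hypersurface $X \subseteq \PP^{n+1}$ of degree $d \geq 2$, the underlying effective $\GG_a^n$-action on $\PP^{n+1}$ lifts, by the linearization argument discussed right after Theorem~\ref{T1}, to a unique faithful rational $(n+2)$-dimensional $\GG_a^n$-module $V$. I would then check that $V$ is cyclic: lifting any point $[v]$ in the open $\GG_a^n$-orbit on $X$ to a vector $v \in V$, the invariant subspace $\langle \GG_a^n \cdot v \rangle$ cannot be proper, for otherwise the closure $X$ would lie in a hyperplane of $\PP^{n+1}$, contradicting $d \geq 2$. Theorem~\ref{T1} now associates to $V$ a pair $(R, W)$ with $\dim R = n+2$ and $\dim W = n$; since $\dim \mm = n+1$, the subspace $W$ is automatically a hyperplane in $\mm$.

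For the converse, a pair $(R, W)$ as in~(2) yields via Theorem~\ref{T1} a faithful cyclic representation $\GG_a^n \to \GL_{n+2}(\KK)$ and hence an effective action on $\PP^{n+1}$. The class of the cyclic vector $1 \in R$ has trivial stabilizer: if $\exp(a_1 X_1 + \dots + a_n X_n) \cdot 1 = \lambda \cdot 1$ in $R$, reduction modulo $\mm$ forces $\lambda = 1$, and since $\exp$ is injective on nilpotent elements in characteristic zero we get $\sum a_i X_i = 0$, hence each $a_i = 0$. Therefore the orbit $\GG_a^n \cdot [1]$ is $n$-dimensional and its closure $X$ is a hypersurface in $\PP^{n+1}$; the same cyclic-vector argument shows $X$ is not a hyperplane, so $\deg X \geq 2$.

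Finally, the two equivalence relations correspond by Theorem~\ref{T1}: two additive actions on hypersurfaces in $\PP^{n+1}$ are equivalent precisely when the underlying $\GG_a^n$-actions on $\PP^{n+1}$ are conjugate in $\Aut(\PP^{n+1})$, and this in turn corresponds to isomorphism of the associated pairs $(R, W)$. The main step requiring care — and the point I expect to be the most delicate — is verifying that, in this correspondence, the hypersurface $X$ itself is recovered intrinsically from $(R, W)$ as the Zariski closure of $\exp(W) \cdot [1]$ inside $\PP(R) \cong \PP^{n+1}$; once this is in place, the $\GG_a^n$-equivariance of the Hassett--Tschinkel construction makes the two notions of equivalence match automatically.
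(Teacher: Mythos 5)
Your proposal is correct and follows essentially the same route as the paper, which deduces Proposition~\ref{T3} from Theorem~\ref{T1} via the linearization of the $\GG_a^n$-action on $\PP^{n+1}$, the observation that a degree $\ge 2$ orbit closure cannot lie in a hyperplane (so the module is cyclic), and the dimension count $n=m-1$ forcing $W$ to be a hyperplane in $\mm$. You supply somewhat more detail than the paper's brief sketch (e.g.\ the trivial-stabilizer computation for $[1]$), but the argument is the same.
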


It is shown in \cite[Theorem~5.1]{ASh} that the degree of the hypersurface corresponding
to a pair $(R,W)$ is the maximal exponent $d$ such that the subspace $W$
does not contain the ideal $\mm^d$.

\begin{example} \label{gap2}
There exist two 3-dimensional local algebras,
$$
\KK[x]/(x^3) \quad \text{and} \quad \KK[x,y]/(x^2,xy,y^2).
$$
In the first case $\mm^3=0$, and in the second one we have $\mm^2=0$.
This shows that for every $\GG_a$-action on $\PP^2$ the orbit closures are
either lines or quadrics.
\end{example}

%%%%%%%%%%%%%%%%%%%%%%%%%%%%%%%%%%%%%%%%%%%%%%%%%%%%%%%%%%%%%%%

\section{Invariant multilinear forms on local algebras}
\label{sec3}

Consider a pair $(R,W)$ as in Proposition~\ref{T3} and let $H\subseteq\PP^{n+1}$
be the corresponding hypersurface. Let us fix a coordinate system on
$R=\langle 1\rangle\oplus\mm$ such that $x_0$ is the coordinate along $\langle 1\rangle$
and $x_1,\ldots,x_{n+1}$ are coordinates on $\mm$.

Assume that $H$ is defined by a homogeneous equation
$$
f(x_0,x_1,\ldots,x_{n+1})=0
$$
of degree $d$. Since $H$ is invariant under the action of $\GG_a^n$,
the polynomial $f$ is $\GG_a^n$-semi-invariant \cite[Theorem~3.1]{PV}.
But the group $\GG_a^n$ has no non-trivial characters, and the polynomial
$f$ is $\GG_a^n$-invariant. Equivalently, $f$ is annihilated
by the Lie algebra $\gg$.

It is well known that for a given homogeneous polynomial $f$ of degree $d$
on a vector space $R$ there exists a unique $d$-linear symmetric map
$$
F \colon R\times R\times\ldots\times R\to \KK
$$
such that $f(v)=F(v,v,\ldots,v)$ for all $v\in R$, see e.g. \cite[Section~9.1]{PV}.
The map $F$ is called the \emph{polarization} of the polynomial $f$.

Since the representation $d\rho$ of the Lie algebra $\gg$ on $R$ is given
by multiplication by elements of $W,$ a homogeneous polynomial $f$ on $R$
is annihilated by $\gg$ if and only if
\begin{equation}
\label{star}
F(ab_1,b_2,\ldots,b_d)+F(b_1,ab_2,\ldots,b_d)+\ldots+F(b_1,b_2,\ldots,ab_d)=0
\ \ \forall \ a\in W, b_1,\ldots,b_d\in R.
\end{equation}

\begin{definition} \label{def}
Let $R$ be a local algebra with the maximal ideal $\mm$.
An \emph{invariant $d$-linear form} on $R$ is a $d$-linear symmetric map
$$
F \colon R\times R\times\ldots\times R\to \KK
$$
such that $F(1,1,\ldots,1)=0$, the restriction of $F$ to $\mm$ is nonzero,
and there exists a hyperplane $W$ in $\mm$
which generates $R$ as an algebra with unit and such that condition~(\ref{star}) holds.
\end{definition}

If $F_1$ (resp. $F_2$) are invariant $d_1$-linear (resp. $d_2$-linear)
forms on $R$ with respect to the same hyperplane $W$, then the product $F_1F_2$ defines
an invariant $(d_1+d_2)$-linear form. An invariant multilinear form is said to be
\emph{irreducible}, if it can not be represented as such a product.

One can show that there is no invariant linear form. It implies that any
invariant bilinear or $3$-linear form is irreducible.

\smallskip

We are ready to formulate our main result.

\begin{theorem} \label{T4}
Additive actions on hypersurfaces of degree $d\ge 2$  in $\PP^{n+1}$
are in natural one-to-one correspondence with pairs $(R,F)$, where $R$
is a local algebra of dimension $n+2$ and $F$ is an irreducible invariant
$d$-linear form on $R$ up to a scalar.
\end{theorem}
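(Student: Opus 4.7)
The plan is to use Proposition~\ref{T3} to reduce the theorem to a bijection at the purely algebraic level. That proposition already identifies equivalence classes of additive actions on hypersurfaces of degree $d\ge 2$ in $\PP^{n+1}$ with isomorphism classes of pairs $(R,W)$, where $W$ is a hyperplane in $\mm$ generating $R$ as an algebra with unit; so it suffices to construct a natural bijection between such pairs and pairs $(R,F)$ with $F$ an irreducible invariant $d$-linear form, $F$ being determined up to a scalar.

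For the forward direction, from a pair $(R,W)$ Proposition~\ref{T3} produces the hypersurface $H\subseteq\PP^{n+1}$ of degree $d$, defined up to scalar by a polynomial $f\in\KK[x_0,\dots,x_{n+1}]$; let $F$ be the polarization of $f$. I verify the conditions of Definition~\ref{def}: the equality $F(1,\dots,1)=f(1,0,\dots,0)=0$ holds because the point $[1]$ lies in the open $\GG_a^n$-orbit, hence on $H$; the restriction $F|_\mm$ is nonzero because $H$, being irreducible of degree at least two, cannot contain the coordinate hyperplane $\{x_0=0\}$; and condition~(\ref{star}) is the polarized translation of the $\gg$-invariance of $f$, derived at the start of Section~\ref{sec3} from $\GG_a^n$-invariance of $H$ and the triviality of characters of $\GG_a^n$. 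Irreducibility of $F$ follows from irreducibility of $H$ (as the closure of a $\GG_a^n$-orbit of a connected group): a decomposition $F=F_1F_2$ with each $F_i$ satisfying Definition~\ref{def} would yield $f=f_1f_2$ with $\deg f_i\ge 1$, both $f_i(1)=0$ and $x_0\nmid f_i$, contradicting irreducibility of the polynomial $f$.

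For the backward direction, a pair $(R,F)$ provides via Definition~\ref{def} a hyperplane $W\subseteq\mm$ generating $R$ as an algebra with unit; Proposition~\ref{T3} applied to $(R,W)$ yields an additive action of $\GG_a^n=\exp(W)$ on $\PP(R)=\PP^{n+1}$ whose orbit closure is a hypersurface $H$ of some degree $d_W$. Setting $f(v)=F(v,\dots,v)$, condition~(\ref{star}) together with $F(1,\dots,1)=0$ gives $\GG_a^n$-invariance of $f$ and $[1]\in V(f)$, so $H\subseteq V(f)$ and the defining equation $f_H$ of $H$ divides $f$, say $f=f_H\cdot g$ with $g$ a homogeneous $\GG_a^n$-invariant polynomial. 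To conclude $H=V(f)$ and $d=d_W$ I rule out a nonconstant $g$: if $g(1)=0$ and $x_0\nmid g$, the polarizations of $f_H$ and $g$ produce a Definition~\ref{def}-factorization of $F$, contradicting irreducibility; the case $x_0\mid g$ is excluded by $F|_\mm\neq 0$. Once $H=V(f)$ is established, the two maps are mutually inverse up to scalars and algebra isomorphism.

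The main obstacle is ruling out the intermediate case of a nonconstant $\GG_a^n$-invariant homogeneous polynomial $g$ with $g(1)\neq 0$ and $x_0\nmid g$ at the end of the backward argument. Handling this case requires combining both normalization conditions $F(1,\dots,1)=0$ and $F|_\mm\neq 0$ of Definition~\ref{def} with the irreducibility of $F$, and is the step where the precise meaning of \emph{irreducible} in the sense of Definition~\ref{def} enters in an essential way.
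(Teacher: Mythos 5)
Your construction follows the same route as the paper: polarize the defining equation in the forward direction, restrict the form to the diagonal in the backward direction, and use Proposition~\ref{T3} to mediate between actions and pairs $(R,W)$. The forward direction is fine, and is in fact slightly more careful than the paper's (which does not spell out why irreducibility of $f$ forces irreducibility of $F$). The problem is the backward direction, and you have put your finger on it yourself: after writing $f=f_H\cdot g$ with $g$ invariant, you dispose of the cases $x_0\mid g$ and $g(1,0,\dots,0)=0$, but the case of a nonconstant invariant $g$ with $g(1,0,\dots,0)\neq 0$ and $x_0\nmid g$ is only announced as ``the main obstacle'' and is never argued. This is not a deferrable verification. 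Since the algebra of $\GG_a^n$-invariants is freely generated by $x_0$ and $f_H$, such $g$ exist in abundance, e.g.\ $g=f_H+c\,x_0^{\deg f_H}$ with $c\neq 0$; for $f=f_H\,g$ the polarization $F$ then satisfies every condition of Definition~\ref{def}, and it admits no factorization into two invariant forms in the sense of that definition, because any homogeneous invariant factor vanishing at $(1,0,\dots,0)$ and not divisible by $x_0$ is automatically divisible by $f_H$, while $f_H^2\nmid f$. Concretely, for $R=\KK[x]/(x^3)$ and $W=\langle x\rangle$, the polarization of $(x_1^2-2x_0x_2)(x_1^2-2x_0x_2+x_0^2)$ is an invariant $4$-linear form that is irreducible in the sense of Definition~\ref{def}, yet its zero locus is a union of two conics in $\PP^2$ rather than an irreducible quartic carrying an additive action.

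So the missing case cannot be closed by formal manipulation of Definition~\ref{def} alone: one must either strengthen the notion of irreducibility (for instance, require the diagonal restriction $f(v)=F(v,\dots,v)$ to be an irreducible polynomial) or supply an argument specific to the structure of the invariant ring. The paper's own proof compresses exactly this point into the unjustified sentence ``By the assumptions, $f$ is irreducible,'' so you have correctly located the delicate step of the argument; but identifying the obstacle is not the same as overcoming it, and as written your proposal does not establish the backward implication.
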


\begin{proof}
An additive action on a hypersurface $H$ in $\PP^{n+1}$ is given
by a faithful rational representation $\rho:\GG_a^n\to\GL_{n+2}(\KK)$
making $\KK^{n+2}$ a cyclic $\GG_a^n$-module. In our correspondence we
identify $\KK^{n+2}$ with the local algebra $R$. We choose coordinates
$x_0,x_1,\ldots,x_{n+1}$ compatible with the decomposition
$R=\langle 1\rangle\oplus\mm$. Let $f(x_0,x_1,\ldots,x_{n+1})=0$ be the equation
of the hypersurface $H$, where $f$ is irreducible. Then the algebra of $\GG_a^n$-invariants
on $\KK^{n+2}$ is freely generated by $x_0$ and $f$.

Every $\GG_a^n$-invariant hypersurface in $\PP^{n+1}$ is given by
$$
\alpha f(x_0,x_1,\ldots,x_{n+1})+\beta x_0^d=0,
\quad (\alpha,\beta)\in\KK^2\setminus\{(0,0)\}.
$$
So we may assume that $f$ does not contain the term $x_0^d$. Let $F$ be the polarization
of $f$. Then condition $(\ref{star})$ holds and $F(1,\ldots,1)=0$. If the restriction of $F$ to
$\mm$ is zero, then $x_0$ divides~$f$, and $f$ is not irreducible, a contradiction.

Conversely, let $(R,F)$ be such a pair and $W$ be a subspace from the definition of $F$.
Then $(R,W)$ gives rise to a structure of a rational $\GG_a^n$-module on $R$.
Consider the hypersurface $f=0$ in $\PP(R)\cong\PP^{n+1}$, where $f(v)=F(v,v,\ldots,v)$.
It is invariant under the action of $\GG_a^n$. By the assumptions, $f$ is irreducible and thus
the hypersurface $f=0$ coincides with the closure of a generic $\GG_a^n$-orbit.
\end{proof}

Given a hyperplane $W$ in the maximal ideal $\mm$ of a local algebra $R$ that generates $R$
as an algebra with unit,
let $d$ be the maximal exponent such that the subspace $W$ does not contain the ideal $\mm^d$.
By Theorem~\ref{T4} and~\cite[Theorem~5.1]{ASh}, there exists a unique up to
a scalar irreducible invariant (with respect to $W$) $d$-linear form $F_W$ on $R$.
Let us write down this form explicitly.

By linearity, we may assume that each argument of $F_W$ is either the unit $1$ or
an element of $\mm$. Fix an isomorphism $\mm/W\cong\KK$ and consider the projection $\pi\colon\mm\to\mm/W\cong\KK$.
We define the form
$$
F_W(b_1,\ldots,b_d):=(-1)^k k!(d-k-1)!\,\pi(b_1\ldots b_d),
$$
where $k$ is the number of units among $b_1,\ldots,b_d$, and for $k=d$ we let
$F_W(1,\ldots,1)=0$.

\begin{lemma} \label{lem}
The form $F_W$ is an irreducible invariant $d$-linear form on $R$.
\end{lemma}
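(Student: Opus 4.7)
The plan is to verify each property listed in Definition~\ref{def} for the explicit formula and then derive irreducibility from the uniqueness statement immediately preceding the lemma. First, $F_W$ is defined on tuples whose entries are either $1$ or elements of $\mm$, and it is extended to all of $R\times\cdots\times R$ by multilinearity using the decomposition $R=\langle 1\rangle\oplus\mm$. Since both the integer $k$ and the product $b_1\cdots b_d$ are symmetric in the inputs, this yields a well-defined symmetric $d$-linear map. The normalization $F_W(1,\ldots,1)=0$ is built into the definition, and the restriction to $\mm$ is nonzero because for $b_i\in\mm$ the formula gives $(d-1)!\,\pi(b_1\cdots b_d)$, and some product of $d$ elements of $\mm$ lies outside $W=\Ker\pi$ by the definition of $d$.

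The heart of the argument is the invariance relation~(\ref{star}). By multilinearity it suffices to assume each $b_i$ is either $1$ or in $\mm$, and to partition $\{1,\ldots,d\}=I\sqcup J$ with $I=\{i:b_i=1\}$ of size $k$. The plan is to compute each of the $d$ summands: for $i\in I$ the unit count drops to $k-1$ and the summand equals $(-1)^{k-1}(k-1)!(d-k)!\,\pi\bigl(a\prod_{j\in J}b_j\bigr)$, while for $i\in J$ the unit count remains $k$ and the summand equals $(-1)^k k!(d-k-1)!\,\pi\bigl(a\prod_{j\in J}b_j\bigr)$. All summands share the same $\pi$-value, so the $d$ coefficients collapse to $k!(d-k)!\bigl[(-1)^{k-1}+(-1)^k\bigr]=0$ whenever $0<k<d$; only algebra is needed in this range.

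The main obstacle, and where the choice of $d$ enters nontrivially, is the two extreme cases. When $k=0$ only the $J$-sum survives and equals $d!\,\pi(ab_1\cdots b_d)$; this vanishes because $a\in W\subseteq\mm$ and $b_j\in\mm$ together give $ab_1\cdots b_d\in\mm^{d+1}$, and the maximality of $d$ forces $\mm^{d+1}\subseteq W=\Ker\pi$. When $k=d$ only the $I$-sum survives and equals $(-1)^{d-1}d!\,\pi(a)$, which vanishes because $a\in W=\Ker\pi$. Together these complete the verification of~(\ref{star}).

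For irreducibility I would invoke the discussion immediately preceding the lemma: by Theorem~\ref{T4} and~\cite[Theorem~5.1]{ASh}, the pair $(R,W)$ determines an irreducible invariant $d$-linear form uniquely up to scalar. Since the $F_W$ constructed above is a nonzero invariant $d$-linear form, it must be a nonzero scalar multiple of that unique form, and in particular it is itself irreducible.
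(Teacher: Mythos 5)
Your proof is correct and follows essentially the same route as the paper: verify condition~(\ref{star}) by splitting into the cases $0<k<d$, $k=0$ (using $\mm^{d+1}\subseteq W$), and $k=d$ (using $a\in W=\Ker\pi$), note that the restriction to $\mm$ is nonzero because $\mm^d\not\subseteq W$, and deduce irreducibility from the uniqueness statement resting on \cite[Theorem~5.1]{ASh}. Your bookkeeping of the summands is in fact slightly more careful than the paper's (which records the $k=d$ coefficient as $-(d-2)!$ rather than your correct $(-1)^{d-1}d!$, an immaterial discrepancy since $\pi(a)=0$ either way).
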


\begin{proof}
We begin with condition (\ref{star}). Since $ab_i\in\mm$ for all $a\in W$ and $b_i\in R$,
with $0<k<d$ this condition can be rewritten as
$$
(k(-1)^{k-1}(k-1)!(d-k)!+(d-k)(-1)^k k!(d-k-1)!)\,\pi(ab_1\ldots b_d)=0,
$$
and it is obvious. For $k=0$ we have $ab_1\ldots b_d\in\mm^{d+1}\subseteq W$,
and thus $\pi(ab_1\ldots b_d)=0$. For $k=d$ we have $-(d-2)!\pi(a)=0$, because $a\in W$.

The restriction of $F$ to $\mm$ is nonzero since $\mm^d$ is not contained in $W$.
It follows from \cite[Theorem~5.1]{ASh} that the form $F$ is irreducible.
Finally, we have $F_W(1,\ldots,1)=0$ by definition.
\end{proof}

The next proposition follows immediately from \cite[Theorems~1 and 4]{MM}.
Let us obtain this result using our technique.

\begin{proposition} \label{smooth}
Let $H$ be a smooth hypersurface in $\PP^{n+1}$ admitting an additive action.
Then $H$ is either a hyperplane or a non-degenerate quadric.
\end{proposition}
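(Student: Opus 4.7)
The strategy is to combine Theorem~\ref{T4} with the explicit formula of Lemma~\ref{lem} to exhibit a singular point on $H$ whenever $\deg H \ge 3$, forcing the degree to be at most $2$. Once this is done, a smooth quadric is by definition non-degenerate, so the proposition follows.

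Assume $d := \deg H \ge 2$; otherwise $H$ is already a hyperplane. By Theorem~\ref{T4} the action is encoded by a pair $(R, F_W)$, with $R$ a local algebra of dimension $n+2$, maximal ideal $\mm$, and hyperplane $W \subseteq \mm$ generating $R$ as an algebra with unit. The defining polynomial of $H$ is $f(v) = F_W(v, \ldots, v)$ for the form $F_W$ of Lemma~\ref{lem}. Since $\dim R = n+2 \ge 3$, the ideal $\mm$ is nonzero and nilpotent; if $N$ denotes its nilpotency index, then the socle $\mathrm{Soc}(R) = \{r \in R : \mm r = 0\}$ contains $\mm^{N-1} \ne 0$. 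I would pick any nonzero $v \in \mm^{N-1}$; since $v \in \mm$ one has $[v] \ne [1]$, and since $v$ lies in the socle, $v^2 = 0$ and therefore $v^{d-1} = 0$ for every $d \ge 3$.

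The main computation is then routine. Inserting $v$ into the formula of Lemma~\ref{lem} yields $f(v) = (d-1)!\,\pi(v^d) = 0$, so $[v] \in H$. For the partial derivatives at $v$, written in the basis $(1, e_1, \ldots, e_{n+1})$ of $R$,
$$
\frac{\partial f}{\partial x_0}(v) = d\,F_W(v, \ldots, v, 1) = -d(d-2)!\,\pi(v^{d-1}), \qquad \frac{\partial f}{\partial x_i}(v) = d\,F_W(v, \ldots, v, e_i) = d(d-1)!\,\pi(v^{d-1} e_i),
$$
and each of these vanishes whenever $d \ge 3$ because $v^{d-1} = 0$. Hence $[v]$ is a singular point of $H$, contradicting smoothness. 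This forces $d \le 2$, and for $d = 2$ the smoothness of the quadric $H$ is exactly the non-degeneracy of the associated symmetric bilinear form $F_W$, completing the argument.

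The only bookkeeping issue I anticipate is confirming that $\mathrm{Soc}(R) \cap \mm$ is genuinely nonzero, which is immediate from the nilpotency of $\mm$ together with $\dim \mm \ge 2$; the essence of the argument is the observation that socle elements square to zero, so for $d$ large enough they make the whole gradient of $f$ collapse. I do not foresee any substantive obstacle.
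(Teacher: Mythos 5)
Your proof is correct and follows essentially the same strategy as the paper's: both use the explicit form $F_W$ of Lemma~\ref{lem} to produce a point of $H$ at which the entire gradient of $f$ vanishes once $d\ge 3$, and then conclude by noting that a smooth quadric must be non-degenerate. The only (immaterial) difference is the choice of candidate singular point --- you take a nonzero socle element $v$ of $R$, so that $v\mm=0$ kills all the relevant products in the polarization, whereas the paper takes $e_{n+1}\in\mm^d\setminus W$ with $\mm^{d+1}\subseteq W$ and observes that $x_{n+1}$ can then occur in $f$ only in the monomial $x_0^{d-1}x_{n+1}$.
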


\begin{proof}
Assume that an additive action on $H$ is given by a triple $(R,W,F)$. Let
$e_0,e_1,\ldots,e_{n+1}$ be a basis of $R$ compatible with the decomposition
$$
R=\langle 1\rangle\oplus W\oplus\langle e_{n+1}\rangle.
$$
Moreover, we may assume that $e_{n+1}\in\mm^d$, where $\mm^{d+1}$ is contained in $W$. Then
in the notation of Lemma~\ref{lem} we have $\pi(be_{n+1})=0$ for all $b\in\mm$.
It means that the variable $x_{n+1}$ can appear in the equation $f(x_0,\ldots,x_{n+1})=0$
of the hyperplane $H$ only in the term $x_0^{d-1}x_{n+1}$. Thus the point
$[0:\ldots:0:1]$ lies on $H$ and it is singular provided $d\ge 3$.
It remains to note that the only smooth quadric is a non-degenerate one.
\end{proof}

\begin{proposition} \label{last}
Let $H$ be a hypersurface in $\PP^{n+1}$ which admits an additive action and
such that the group $\Aut(H)^0$ is reductive. Then $H$ is either a hyperplane or
a non-degenerate quadric.
\end{proposition}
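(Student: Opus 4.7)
The plan is to deduce the proposition immediately from the two earlier results Proposition \ref{spher} and Proposition \ref{smooth}. First I would observe that the projective hypersurface $H$ is complete and that the given additive action on $H\subseteq\PP^{n+1}$ restricts to an inner additive action on $H$ in the sense of Section~\ref{sec1}. Indeed, the existence of an open $\GG_a^n$-orbit of dimension $n=\dim\GG_a^n$ on $H$ forces both the kernel of the $\GG_a^n$-action on $H$ and the generic isotropy group to be trivial, since in characteristic zero any proper closed subgroup of positive codimension of $\GG_a^n$ has positive dimension and there are no nontrivial finite subgroups. Hence the hypotheses of Proposition \ref{spher} are met for $X=H$.

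Applying Proposition \ref{spher} then identifies $H$ with a generalized flag variety $G/P$, where $G$ is a semisimple linear algebraic group and $P$ is a parabolic subgroup. Since $G/P$ is a homogeneous $G$-space, it is smooth. Consequently $H$ is a smooth projective hypersurface in $\PP^{n+1}$ that admits an additive action, so Proposition \ref{smooth} applies and yields that $H$ is either a hyperplane or a non-degenerate quadric, as required.

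There is essentially no substantive obstacle: the entire content of the statement is already packaged in the two earlier propositions, and the only check needed is the routine passage from the ambient additive action to an inner additive action on $H$. One could phrase the whole argument as: reductivity of $\Aut(H)^0$ forces homogeneity of (the normalization of) $H$ via the spherical-of-rank-zero viewpoint, homogeneity forces smoothness, and smoothness plus the structural result of Proposition~\ref{smooth} forces the conclusion.
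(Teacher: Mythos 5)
Your proposal is correct and follows exactly the paper's own route: apply Proposition~\ref{spher} to get smoothness of $H$ (via the identification with a generalized flag variety $G/P$) and then conclude with Proposition~\ref{smooth}. The extra verification you supply --- that the ambient additive action restricts to an inner additive action because the generic stabilizer is finite and $\GG_a^n$ has no nontrivial finite subgroups in characteristic zero --- is a correct filling-in of a step the paper leaves implicit.
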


\begin{proof}
By Proposition~\ref{spher}, the variety $H$ is smooth, and the assertion follows
from Proposition~\ref{smooth}.
\end{proof}

\begin{remark}
Take a triple $(R,W,F)$ as in Definition~\ref{def} and consider the sum $I$ of all
ideals of the algebra $R$ contained in $W$. It is the biggest ideal of $R$
contained in $W$. Taking a compatible basis of $R$, we see that the equation
of the corresponding hypersurface does not depend on the coordinates in $I$.
Moreover, for the factor algebra $R'=R/I$ we have
$$
R'=\langle 1'\rangle \oplus W'\oplus(\mm')^d
$$
with $\mm'=\mm/I$, $W'=W/I$, and $\dim(\mm')^d=1$. The invariant form $F$
descents to $R'$, the subspace $W'$ contains no ideal of $R'$, and the algebra $R'$
is Gorenstein. Such a reduction is useful in classification problems.
\end{remark}

%%%%%%%%%%%%%%%%%%%%%%%%%%%%%%%%%%%%%%%%%%%%%%%%%%%%%%%%%%%%%%%

\section{Non-degenerate quadrics}
\label{sec4}

In this section we classify non-degenerate invariant bilinear symmetric forms on local
algebras. These results are obtained in~\cite{Sh}, but we give a short elementary proof.

Let $R$ be a local algebra of dimension $n+2$ with the maximal ideal $\mathfrak m$  and
$F$ a non-degenerate bilinear symmetric form on $R$ such that
$F(1,1)=0$. Assume that for some hyperplane $W$ in $\mathfrak m$ generating $R$ we have
\begin{equation}
\label{star2}
F(ab_1, b_2) + F(b_1, ab_2) = 0 \quad \mbox{for all } b_1,b_2\in R \mbox { and } a\in W.
\end{equation}

We choose a basis $e_0=1,e_1,\ldots,e_n,e_{n+1}$ of $R$ such that
$W=\langle e_1,\ldots,e_n\rangle$ and $\mm=\langle e_1,\ldots,e_{n+1}\rangle$.
For any $b\in R$ let $b=b^{(0)}+b^{(1)}+\ldots+b^{(n+1)}$ be the decomposition
corresponding to this basis.

\begin{lemma}
\label{tl}
\begin{enumerate}
\item[1)]
$F(1,a)=0$ for all $a\in W$;
\item[2)]
$F(1,b)=F(1,b^{(n+1)})$ for all $b\in R$;
\item[3)]
If $a,a'\in W$ and $aa'\in W$, then $aa'=0$;
\item[4)]
The restriction of the form $F$ to $W$ is non-degenerate.
\end{enumerate}
\end{lemma}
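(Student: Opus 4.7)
The plan is to establish the four assertions in order, each building on its predecessors.

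Part (1) is immediate: setting $b_1 = b_2 = 1$ in (\ref{star2}) yields $F(a,1) + F(1,a) = 0$, so by symmetry $F(1,a) = 0$. Part (2) follows by expanding $b$ in the chosen basis: $F(1, b^{(0)})$ is a multiple of $F(1,1) = 0$, and $F(1, b^{(i)}) = 0$ for $1 \le i \le n$ by (1), since $e_i \in W$.

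For (3), the strategy is to produce two expressions for $F(aa', b)$ that force it to vanish. Applying (\ref{star2}) with $a$ as the $W$-element, $b_1 = a'$, $b_2 = b$ gives $F(aa', b) = -F(a', ab)$; applying it again with $a'$ as the $W$-element, $b_1 = 1$, $b_2 = ab$ gives $F(a', ab) = -F(1, aa'b)$, so $F(aa', b) = F(1, aa'b)$. On the other hand, using the hypothesis $aa' \in W$ directly in (\ref{star2}) with $b_1 = 1$, $b_2 = b$ gives $F(aa', b) = -F(1, aa'b)$. Combining, $F(aa', b) = 0$ for every $b \in R$, so $aa' = 0$ by non-degeneracy of $F$.

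Part (4) is the main obstacle. Fix $a \in W$ with $F(a, W) = 0$ and aim to show $a = 0$. The plan has three steps. First, part (1) and $F(1,1) = 0$ show that $F(1, \cdot)$ vanishes on $\langle 1 \rangle \oplus W$, so non-degeneracy of $F$ forces $F(1, e_{n+1}) \ne 0$. Second, for each $w \in W$, applying (\ref{star2}) with $b_1 = a$, $b_2 = 1$ gives $F(wa, 1) = -F(a, w) = 0$; by symmetry and part (2), $F(1, wa) = F(1, (wa)^{(n+1)}) = 0$, and together with $F(1, e_{n+1}) \ne 0$ this forces the $e_{n+1}$-component of $wa$ to vanish. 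Since $wa \in \mm$, we obtain $wa \in W$, and then part (3) gives $wa = 0$. Third, since $W$ generates $R$ as an algebra with unit, $e_{n+1}$ can be written as a polynomial without constant term in elements of $W$; for each monomial $w_1 \cdots w_k$ with $k \ge 1$, (\ref{star2}) gives $F(a, w_1 \cdots w_k) = -F(w_1 a, w_2 \cdots w_k) = 0$, so $F(a, e_{n+1}) = 0$. Combined with $F(a, 1) = 0$ and $F(a, W) = 0$, non-degeneracy of $F$ on $R$ yields $a = 0$.
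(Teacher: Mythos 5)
Your proofs of parts 1)--3) are correct and essentially identical to the paper's: part 3) in particular is the same computation, deriving $F(aa',b)=F(1,aa'b)$ by moving $a$ and then $a'$ across, and $F(aa',b)=-F(1,aa'b)$ by moving $aa'\in W$ across, whence $F(aa',b)=0$ for all $b$.

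For part 4) you take a genuinely different, and longer, route. The paper argues purely linear-algebraically: if $0\ne a\in W$ satisfies $F(a,W)=0$, then $F(a,1)=0$ by part 1), so non-degeneracy forces $\lambda:=F(a,e_{n+1})\ne 0$; setting $\mu:=F(1,e_{n+1})$, the nonzero vector $\mu a-\lambda\cdot 1$ pairs to zero with $1$, with $W$, and with $e_{n+1}$, hence lies in $\Ker F$ --- a contradiction. This uses only parts 1) and the direct sum decomposition $R=\langle 1\rangle\oplus W\oplus\langle e_{n+1}\rangle$, and never touches the multiplication. Your argument instead exploits the algebra structure: you first establish the stronger intermediate fact $Wa=0$ (via $F(1,(wa)^{(n+1)})=0$, $F(1,e_{n+1})\ne 0$, and part 3)), and then use the hypothesis that $W$ generates $R$ to write $e_{n+1}$ as a combination of monomials $w_1\cdots w_k$, $k\ge 1$, and kill $F(a,e_{n+1})$ term by term via (\ref{star2}). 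Every step checks out (note $wa\in\mm^2\subseteq\mm$, so vanishing of its $e_{n+1}$-component does put it in $W$, and the constant term of the expression for $e_{n+1}\in\mm$ indeed vanishes). What your approach buys is the extra structural information $Wa=0$ for a hypothetical kernel element of $F|_W$, which foreshadows the relations $e_ie_j=0$ proved later in Proposition~\ref{th1}; what it costs is the reliance on the generation hypothesis, which the paper's two-line argument does not need.
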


\begin{proof}
Assertion~1) follows from~(\ref{star2}) with $b_1=b_2=1$. For~2), note that
$$
F(1,b)=F(1,b^{(0)})+F(1,b^{(1)}+\ldots+b^{(n)})+F(1,b^{(n+1)}).
$$
The first term is $0$ because of $F(1,1)=0$, and the second one is $0$ by~1).
If $a,a',aa'\in W$, then for any $b\in R$ we have
$$
F(b,aa')=-F(ab,a')=F(aa'b,1)=-F(b,aa'),
$$
and $F(b,aa')=0$. Since $F$ is non-degenerate, we obtain 3).

For 4), assume that for some $0\ne a\in W$ we have $F(a,a')=0$ for all $a'\in W$.
Since $F$ is non-degenerate, it yields $F(a,e_{n+1})=\lambda$ for some nonzero $\lambda\in\KK$.
If $F(1,e_{n+1})=\mu$, then $F(\mu a-\lambda 1, e_{n+1})=0$, and the vector $\mu a-\lambda 1$
is in the kernel of the form $F$.
\end{proof}

Let us denote by $M(F)$ the matrix of a bilinear form $F$ in a given basis.

\begin{proposition}
		\label {th1}
In the notation as above, the triple $(R,W,F)$ can be transformed into the form
$$
R=\KK[e_1,\dots,e_n]/(e_i^2 - e_j^2, e_ie_j; \ 1\le i<j\le n),
\qquad
W=\langle e_1,\dots,e_n\rangle,
$$
$$M(F) =
\begin{pmatrix}
				0 			&0 			&\ldots	&0	&1\\
				0 			&-1			&\ldots	&0 	&0\\
				\vdots 	&\vdots			&\ddots	&\vdots	&\vdots\\
				0 	    &0 	&\ldots &-1 				&0\\
				1			  &0			&\ldots			&0	&0\\
\end{pmatrix}
$$		
\end{proposition}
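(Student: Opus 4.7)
The plan is to normalize the basis $\{1,e_1,\ldots,e_n,e_{n+1}\}$ of $R$ in three stages: first fix the first row and column of $M(F)$, then diagonalize the restriction $F|_W$, and finally adjust $e_{n+1}$ so that the multiplication table and the remaining matrix entries simultaneously take the stated form. All four assertions of Lemma~\ref{tl} and the invariance identity~(\ref{star2}) will be used.

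For the first stage, Lemma~\ref{tl}(1)--(2) already give that the first row of $M(F)$ equals $(0,\dots,0,\alpha)$ with $\alpha=F(1,e_{n+1})$; non-degeneracy forces $\alpha\ne 0$, so rescaling $e_{n+1}$ I arrange $\alpha=1$. For the second stage, for $a,a'\in W$ I expand the product in the decomposition $\mm=W\oplus\langle e_{n+1}\rangle$ as $aa'=\phi(a,a')\,e_{n+1}+P_W(aa')$; this defines a symmetric bilinear form $\phi$ on $W$. Setting $b_1=a'$, $b_2=1$ in~(\ref{star2}) and applying Lemma~\ref{tl}(2) yields $F(a,a')=-\phi(a,a')$, so $\phi$ is non-degenerate by Lemma~\ref{tl}(4). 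Since $\KK$ is algebraically closed, I choose an orthonormal basis $e_1,\ldots,e_n$ of $W$ for $\phi$, giving $F(e_i,e_j)=-\delta_{ij}$.

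For the third stage, Lemma~\ref{tl}(3) applied to $\phi(e_i,e_j)=0$ gives $e_ie_j=0$ for $i\ne j$, while $e_i^2=e_{n+1}+v_i$ for some $v_i\in W$. The crucial step is to prove that the $v_i$ all coincide: feeding $(a,b_1,b_2)=(e_i,e_i,e_j)$ into~(\ref{star2}) and using $e_ie_j=0$ for $i\ne j$ (together with the consequence $F(e_i^2,e_i)=0$ of~(\ref{star2}) with $i=j$) produces the identity $F(v_i,e_j)=-F(e_{n+1},e_j)$ for every pair $(i,j)$, and non-degeneracy of $F|_W$ then forces $v_i=v$ for a single $v\in W$. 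Replacing $e_{n+1}$ by $e_{n+1}+v$ preserves the earlier normalizations (since $v\in W$) and converts the multiplication table to $e_i^2=e_{n+1}$, $e_ie_j=0$ for $i\ne j$. The remaining identities $e_ie_{n+1}=e_j(e_ie_j)=0$ and $e_{n+1}^2=e_j(e_je_{n+1})=0$ follow by taking $j\ne i$ (the edge case $n=1$ is handled separately using Nakayama, which forces $\mm^3=0$), and substituting these back into~(\ref{star2}) gives $F(e_i,e_{n+1})=F(e_{n+1},e_{n+1})=0$, matching the prescribed matrix.

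The main obstacle I anticipate is the uniformity $v_i=v$ in the third stage; without it, a single shift of $e_{n+1}$ could not simultaneously normalize every square $e_i^2$. It is exactly here that the off-diagonal vanishing obtained in the second stage, the invariance identity, and the non-degeneracy from Lemma~\ref{tl}(4) must conspire. Everything else reduces to routine bookkeeping with~(\ref{star2}) and the standard diagonalization of non-degenerate symmetric bilinear forms over an algebraically closed field.
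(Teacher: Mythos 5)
Your proof is correct and follows essentially the same route as the paper's: normalize $F(1,e_{n+1})=1$, diagonalize $F|_W$ using Lemma~\ref{tl},~4), deduce $e_ie_j=0$ via Lemma~\ref{tl},~3) and $e_i^2=e_{n+1}+v_i$ with $v_i\in W$, shift $e_{n+1}$, and then show $e_{n+1}\mm=0$ and $e_{n+1}\perp_F\mm$. The single point of divergence is the step showing that all the $v_i$ coincide: the paper factors $e_1^2-e_i^2=(e_1+e_i)(e_1-e_i)\in W$ and applies Lemma~\ref{tl},~3), whereas you derive $F(v_i,e_j)=-F(e_{n+1},e_j)$ from the invariance identity~(\ref{star2}) and invoke non-degeneracy of $F|_W$ --- both arguments are valid.
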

		
\begin{proof}
Since $F$ is non-degenerate, we may assume that $F(1,e_{n+1})=1$.
Using Lemma~\ref{tl},~4), we may suppose that $F(e_i,e_j)=-\delta_{ij}$
for all $1\le i,j\le n$.
Now the matrix of the form $F$ looks like
$$
\begin{pmatrix}
				0 			&0 			&\ldots	&0	&1\\
				0 			&-1			&\ldots	&0 	&*\\
				\vdots 	&\vdots			&\ddots	&\vdots	&\vdots\\
				0 	    &0 	&\ldots &-1 				&*\\
				1			  &*			&\ldots			&*	&*\\
\end{pmatrix}
$$
		
For all $1\le i<j\le n$ we have $F(1,e_ie_j) = - F(e_i,e_j) = 0$.
It follows from Lemma~\ref{tl}, 2) that $(e_ie_j)^{(n+1)} = 0$.
We conclude that $e_ie_j\in W$ and thus $e_ie_j = 0$ by Lemma~\ref{tl}, 3).
		
Since $F(1, e_i^2) = -F(e_i, e_i) = 1$, we have
$e_i^2 = e_{n+1} + f_i$, where $f_i \in W$.
Then
$$
(e_1 + e_i)(e_1 - e_i) = e_1^2 - e_i^2 = f_1-f_i \in W.
$$
By Lemma~\ref{tl}, 3) we obtain $e_1^2 = e_i^2$.
		
Without loss of generality it can be assumed that $e_{n+1}=e_1^2$.
Let $n\ge 2$. Then $e_{n+1}e_i = e_j^2e_i = 0$, where $1\le i\ne j\le n$.
If $n = 1$ then $e_{n+1}e_1 = e_1^3 \in \mm^3=0$.

Hence $e_{n+1}b=0$ for any element $b\in \mm$, and $R$ is isomorphic to
$\KK[e_1,\dots,e_n]/(e_i^2-e_j^2, e_ie_j)$.
		
It remains to prove that $e_{n+1}\perp_F\! \mm$.
Indeed, we have
$$
F(e_{n+1},b)=F(e_1^2,b)=-F(e_1,e_1b)=F(1,e_1^2b)=F(1,0)= 0 \quad \forall b\in\mm.
$$
This completes the proof of the proposition.
\end{proof}

As a corollary we obtain the result of \cite[Theorem~4]{Sh}.

\begin{corollary}
A non-degenerate quadric $Q_n\subseteq\PP^{n+1}$ admits a unique additive
action up to equivalence.
\end{corollary}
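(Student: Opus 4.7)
The plan is to deduce the corollary directly from Theorem~\ref{T4} together with the normal form established in Proposition~\ref{th1}. By Theorem~\ref{T4}, equivalence classes of additive actions on a hypersurface $H$ of degree $d\ge 2$ in $\PP^{n+1}$ are in natural bijection with isomorphism classes of pairs $(R,F)$, where $R$ is a local algebra of dimension $n+2$ and $F$ is an irreducible invariant $d$-linear symmetric form on $R$, taken up to a scalar. For $d=2$ irreducibility is automatic (as noted after Definition~\ref{def}), so the classification reduces to pairs $(R,F)$ with $F$ an invariant bilinear symmetric form.

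The next step is to identify which of these pairs correspond to a \emph{non-degenerate} quadric. Since $F$ is the polarization of the defining quadratic form $f$, the quadric $\{f=0\}$ is non-degenerate in $\PP^{n+1}$ if and only if the symmetric bilinear form $F$ on $R$ is non-degenerate. Thus additive actions on $Q_n$ correspond to isomorphism classes of pairs $(R,F)$ where $F$ is a \emph{non-degenerate} invariant bilinear symmetric form satisfying the conditions of Definition~\ref{def}; and a hyperplane $W\subset\mm$ witnessing the invariance exists by definition.

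With this translation in place, the uniqueness statement becomes precisely the content of Proposition~\ref{th1}: any triple $(R,W,F)$ of this type can be brought, by an isomorphism of algebras respecting $W$ and rescaling $F$, into the explicit canonical triple displayed there. Consequently, any two additive actions on $Q_n$ give rise to triples that are isomorphic to this common normal form, hence to isomorphic pairs $(R,F)$ (up to scalar), and therefore to equivalent additive actions by Theorem~\ref{T4}.

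The only thing that needs care, rather than being a real obstacle, is the bookkeeping between the three notions of sameness in play: equivalence of additive actions on $Q_n$, isomorphism of pairs $(R,F)$ in Theorem~\ref{T4}, and isomorphism of triples $(R,W,F)$ in Proposition~\ref{th1}. Since the hyperplane $W$ is intrinsically recovered from the action as $W=d\rho(\gg)\subseteq\mm$, an isomorphism of triples is the same as an isomorphism of the underlying algebras identifying the representations; by the construction in Section~\ref{sec2} this matches the equivalence of actions in $\PP^{n+1}$. All the substantive work has already been done in Proposition~\ref{th1}, so no further computation is needed.
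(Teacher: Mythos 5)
Your proposal is correct and follows essentially the same route as the paper: the corollary is obtained by combining the correspondence of Theorem~\ref{T4} (with the observation that non-degeneracy of the quadric equals non-degeneracy of its polarization $F$) with the normal form of Proposition~\ref{th1}, which is exactly how the paper deduces it. The extra bookkeeping you include about matching the notions of equivalence is a harmless elaboration of what the paper leaves implicit.
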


%%%%%%%%%%%%%%%%%%%%%%%%%%%%%%%%%%%%%%%%%%%%%%%%%%%%%%%%%%%%%%%

\section{Quadrics of corank one}
\label{sec5}

Let us classify invariant bilinear symmetric forms of rank $n+1$ on local
$(n+2)$-dimensional algebras. Geometrically these results can be interpreted
as a classification of additive actions on quadrics of corank one in $\PP^{n+1}$.

Let $R$ be a local algebra of dimension $n+2$, $n\ge 2$, with the maximal ideal $\mathfrak m$ and
$F$ a bilinear symmetric form of rank $n+1$ on $R$ such that $F(1,1)=0$.
Assume that for some hyperplane
$W$ in $\mathfrak m$ condition~(\ref{star2}) holds.
We choose a basis $e_0=1,e_1,\ldots,e_n,e_{n+1}$ of $R$ such that
$W=\langle e_1,\ldots,e_n\rangle$ and $\mm=\langle e_1,\ldots,e_{n+1}\rangle$.

\begin{lemma}
\label{kerl}
The kernel $\Ker F$ is contained in $W$.
\end{lemma}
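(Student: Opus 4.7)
The plan is first to understand how $F$ interacts with the decomposition $R=\langle 1\rangle\oplus W\oplus\langle e_{n+1}\rangle$, and then to exploit the fact that $\Ker F$ is one-dimensional and stable under multiplication by elements of $W$. As a preliminary I would reproduce parts 1) and 2) of Lemma~\ref{tl} in the corank-one setting: putting $b_1=b_2=1$ in~(\ref{star2}) gives $F(1,a)=0$ for all $a\in W$, and combined with $F(1,1)=0$ this forces $F(1,b)=F(1,b^{(n+1)})$ for every $b\in R$. In other words, the functional $F(1,\cdot)$ reads off only the $e_{n+1}$-component.

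Before attacking the main statement I would dispose of the degenerate possibility $F(1,e_{n+1})=0$. If it held, $F(1,\cdot)$ would vanish identically, so $1\in\Ker F$; then taking $b_1=1$ in~(\ref{star2}) would give $F(a,b_2)=-F(1,ab_2)=0$ for every $a\in W$ and $b_2\in R$, so the $n$-dimensional subspace $W$ would lie inside the $1$-dimensional kernel, which is absurd since $n\ge 2$. Hence I may normalize $F(1,e_{n+1})=1$.

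Now take $v\in\Ker F$, written as $v=\alpha\cdot 1+w+\beta e_{n+1}$ with $w\in W$. From $F(1,v)=0$ and the previous paragraph, $\beta=0$. The remaining task is to show $\alpha=0$. The crucial structural input is that $\Ker F$ is invariant under multiplication by any $a\in W$: by~(\ref{star2}), $F(av,b)=-F(v,ab)=0$. Assuming $\alpha\ne 0$, I rescale so that $v=1+w$; then for any $a\in W$ the element $av=a+aw$ must lie in $\Ker F=\KK v$. Its component along $1$ is zero while that of $v$ is not, so the scalar of proportionality vanishes, $av=0$, and in particular $aw=-a$.

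The identity $a=-aw$ then propagates: applying it twice gives $a=aw^2$, hence $a=aw^{2k}$ for every $k$, and nilpotency of $w\in\mm$ forces $a=0$. Thus $W=0$, contradicting $\dim W=n\ge 2$. I expect the main obstacle to be recognising that the $W$-invariance of $\Ker F$, combined with its one-dimensionality, is precisely the structural fact that reduces the problem to a short identity in a nilpotent commutative algebra; the preliminary check that $F(1,e_{n+1})\ne 0$ is a second subtle point, since if it failed the whole strategy would collapse at the first step.
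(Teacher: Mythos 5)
Your proof is correct, and it reorganizes the argument in a way that differs from the paper's. The paper splits the hypothesis $l\notin W$ into four cases according to the position of the kernel generator relative to $\langle 1\rangle$, $\mm$, and $\langle 1,W\rangle$, and in each case manufactures the contradiction $1\in\Ker F$ (in the hardest case, $l=1+f$ with $0\neq f\in W$, by telescoping $F(1,b)=F(1,fb)=\dots=F(1,f^{n+3}b)=0$ via nilpotency of $f$). You instead kill the two offending components of a kernel vector $v=\alpha\cdot 1+w+\beta e_{n+1}$ separately: the $e_{n+1}$-component dies because $F(1,\cdot)$ only sees that coordinate and $F(1,e_{n+1})\neq 0$ (your preliminary reduction, which absorbs the paper's cases 1--3 in one stroke), and the $\langle 1\rangle$-component dies because $\Ker F$ is stable under multiplication by $W$ --- a fact the paper uses later, in the proof of Proposition~\ref{prco}, to show $e_n\mm=0$, but not in this lemma --- so that comparing $\langle 1\rangle$-components forces $av=0$, whence $aw=-a$, $a=aw^{2k}$, and $a=0$ for all $a\in W$. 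Both arguments ultimately rest on nilpotency of $\mm$, but yours trades the four-way case analysis for the single structural observation that the one-dimensional kernel is a $W$-submodule; that makes the proof shorter and arguably more conceptual, at the cost of the explicit normalization $F(1,e_{n+1})=1$ up front (which the paper also performs, just inside Proposition~\ref{prco} rather than in the lemma).
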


\begin{proof}
Let $\Ker F=\langle l\rangle$. Assume that $l$ is not in $W$. Then we should consider
four alternatives.

\begin{enumerate}
\item Let $\langle l\rangle = \langle 1\rangle$.
Then $F(a, b) = -F(1, ab) = 0$ for all $a\in W, b\in R$,
and $\dim\Ker F \ge 2$, a contradiction.
\item Let $\langle l\rangle \subseteq \mm\setminus W$.
Without loss of generality it can be assumed that $l=e_{n+1}$.
As we have seen, $F(1,b)=F(1,b^{(n+1)})=0$ for all $b\in R$.
Thus we have $1\in\Ker F$, which leads to a contradiction.
\item Let $\langle l\rangle\subseteq R \setminus
(\mm \cup \langle 1, W\rangle)$.
Without loss of generality it can be assumed that $l=1+e_{n+1}$.
We have $0=F(1,l)=F(1,1)+F(1,e_{n+1})=F(1,e_{n+1})$.
It again follows that $1\in\Ker F$.
\item Let $\langle l\rangle\subseteq \langle 1, W\rangle\setminus W$.
We can assume that $l=1+f$, where $W\ni f\ne 0$.
Then
$$
F(1,b)=-F(f,b)=F(1,fb)=\ldots=F(1,f^{n+3}b)=0 \quad \forall \, b\in R.
$$
Thus we again have $1\in\Ker F$.
\end{enumerate}

This completes the proof of the lemma.
\end{proof}

\begin{proposition}
\label {prco}
In the notation as above, the triple $(R,W,F)$ can be transformed into the form
$$
M(F)=
\begin{pmatrix}
				0 			&0 			&\ldots	&0	&0 &1\\
				0 			&-1			&\ldots	&0 	&0 &0\\
				\vdots 	&\vdots			&\ddots	&\vdots	&\vdots &\vdots\\
				0 	    &0 	&\ldots &-1 		&0	&0\\
				0       &0  &\ldots         &0  &0 &0\\
        1			  &0	&\ldots			&0	&0 &0\\
\end{pmatrix},
\quad W=\langle e_1,\ldots,e_n\rangle,
$$
and $R$ is isomorphic to one of the following algebras:
\begin {enumerate}
\item
$
\KK[e_1,\ldots,e_n]/(e_ie_j - \lambda_{ij}e_n, e_i^2-e_j^2-(\lambda_{ii}-\lambda_{jj})e_n,
e_se_n, \, 1\le i<j\le n-1, 1\le s\le n, n\ge 3)
$
\smallskip
\noindent where $\lambda_{ij}$ are elements of a symmetric block diagonal $(n-1)\times (n-1)$-matrix $\Lambda$ such that each block $\Lambda_k$ is
    $$
			\lambda_k
			\begin{pmatrix}
				1& 0& & 0\\
				0& \ddots&\ddots& \\
				& \ddots& \ddots& 0\\
				0& & 0& 1\\
			\end{pmatrix}
			+\cfrac{1}{2}
			\begin{pmatrix}
				0& 1& & 0\\
				1& \ddots&\ddots& \\
				& \ddots& \ddots& 1\\
				0& & 1& 0\\
			\end{pmatrix}
			+\cfrac{i}{2}
			\begin{pmatrix}
				0& & 1& 0\\
				& \ddots& \ddots& -1\\
				1 & \ddots& \ddots&\\
				0& -1 & & 0\\
			\end{pmatrix}
		$$ 	
with some $\lambda_k\in\KK$;
\smallskip
		\item $\KK[e_1,e_2]/(e_1^3, e_1e_2,e_2^2)$  \ or \
          $\KK[e_1]/(e_1^4)$ with $e_2=e_1^3, e_3=e_1^2$.
		\end {enumerate}
\end{proposition}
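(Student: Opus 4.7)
The plan is to run the scheme of Proposition~\ref{th1} again, using Lemma~\ref{kerl} to pin down the one-dimensional kernel of $F$ inside $W$ and tracking how the corank-one degeneracy modifies each normalisation step. I would take $e_n$ spanning $\Ker F$, choose a complement $W'=\langle e_1,\ldots,e_{n-1}\rangle$ of $\langle e_n\rangle$ in $W$, and check by an easy adaptation of Lemma~\ref{tl}~4) that $F|_{W'}$ remains non-degenerate and $F(1,e_{n+1})\ne 0$ for any $e_{n+1}\notin\langle 1,W\rangle$ (else $1$ would lie in $\Ker F\subseteq W$). The normalisations of Proposition~\ref{th1} then apply essentially verbatim: rescale $e_{n+1}$ to make $F(1,e_{n+1})=1$, orthonormalise the $e_i$ to get $F(e_i,e_j)=-\delta_{ij}$, and adjust $e_{n+1}$ by a vector from $W'\oplus\langle 1\rangle$ to zero out $F(e_j,e_{n+1})$ and $F(e_{n+1},e_{n+1})$. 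This produces the displayed matrix $M(F)$.

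Next I would read off the algebra structure. The corank-one analogue of Lemma~\ref{tl}~3) states that if $a,a'\in W$ and $aa'\in W$, then $aa'\in\Ker F=\langle e_n\rangle$; the same chain $F(b,aa')=-F(b,aa')$ proves it. Combining this with parts~1) and~2) of Lemma~\ref{tl} yields in sequence: $e_ne_i=\mu_ie_n$ with $\mu_i=0$ by the nilpotency of $e_i$ (and similarly $e_n^2=0$), so $e_n$ is annihilated by $\mm$; $e_ie_j=\lambda_{ij}e_n$ for $1\le i<j\le n-1$; and $e_i^2=e_{n+1}+h_i$ with $h_i\in W$, where the invariance identity $F(e_k,e_i^2)=-F(e_i,e_ie_k)=0$ for every $k\le n$, together with the non-degeneracy of $F|_{W'}$, forces the $W'$-component of $h_i$ to vanish, leaving $e_i^2=e_{n+1}+\lambda_{ii}e_n$. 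For $n\ge 3$ the remaining products are then forced, e.g.\ $e_ie_{n+1}=e_j(e_ie_j)-\lambda_{jj}e_ie_n=0$ on picking any $j\ne i$. One arrives at the algebras of case~(1), parametrised by the symmetric $(n-1)\times(n-1)$ matrix $\Lambda=(\lambda_{ij})$.

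The residual freedom in the basis consists of orthogonal transformations of $W'$ (preserving $-I$) together with rescalings $e_n\mapsto ce_n$, under which $\Lambda$ transforms by $\Lambda\mapsto c^{-1}A\Lambda A^T$ with $A^T=A^{-1}$. The main obstacle is the ensuing linear-algebra problem: bringing a symmetric matrix into canonical form under orthogonal similarity over an algebraically closed field of characteristic zero. The answer is a block-diagonal matrix whose blocks are $\lambda_kI$ plus the prescribed nilpotent symmetric Jordan piece displayed in the statement, and verifying this normal form is where the bulk of the technical work sits. Finally, the case $n=2$ requires separate handling because the cross-product argument for $e_1e_{n+1}$ is unavailable for want of a second index: a direct calculation with $F(e_1,e_1e_{n+1})=0=F(e_2,e_1e_{n+1})$ places $e_1^3=e_1e_{n+1}$ in $\langle e_n\rangle=\langle e_2\rangle$, and the dichotomy $e_1^3=0$ versus $e_1^3\ne 0$ yields respectively the two algebras $\KK[e_1,e_2]/(e_1^3,e_1e_2,e_2^2)$ and $\KK[e_1]/(e_1^4)$ of case~(2).
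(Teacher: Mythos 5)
Your overall architecture coincides with the paper's: place $\Ker F=\langle e_n\rangle$ inside $W$ via Lemma~\ref{kerl}, show $F$ is non-degenerate on a complement $W'=\langle e_1,\ldots,e_{n-1}\rangle$, use the corank-one analogue of Lemma~\ref{tl},~3) to get $e_ie_j\in\langle e_n\rangle$ and $e_i^2-e_j^2\in\langle e_n\rangle$, read off the multiplication on $W'$ as $I\,e_{n+1}+\Lambda e_n$, reduce the symmetric matrix $\Lambda$ to its canonical form under orthogonal similarity, and treat $n=2$ separately. There is, however, a genuine error in your opening normalization: you cannot ``adjust $e_{n+1}$ by a vector from $W'\oplus\langle 1\rangle$'' to kill $F(e_{n+1},e_{n+1})$. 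The basis is required to satisfy $\mm=\langle e_1,\ldots,e_{n+1}\rangle$, so adding a multiple of $1$ to $e_{n+1}$ is not a legitimate move (it would also wreck your own later step $e_i^2=e_{n+1}+h_i$ with $h_i\in W$, since $e_i^2\in\mm$). Within $\mm$, once $F(1,e_{n+1})=1$ and $F(e_j,e_{n+1})=0$ are arranged, the only remaining freedom is $e_{n+1}\mapsto e_{n+1}+ce_n$, which does not change $F(e_{n+1},e_{n+1})$ at all because $e_n\in\Ker F$. So the vanishing of the corner entry is not a normalization but a fact that must be deduced from the algebra structure, and that is exactly what the paper does: after establishing $e_n\mm=0$, $e_{n+1}\equiv e_1^2\pmod{e_n}$ and $e_{n+1}\mm=0$ (resp.\ $e_1^4=0$ when $n=2$), it obtains $F(e_{n+1},a)=0$ for $a\in W$ from~(\ref{th2eq1}) and $F(e_{n+1},e_{n+1})=F(1,e_{n+1}^2)=0$. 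Your $W'$-adjustment of $e_{n+1}$ is legitimate and your derivation of $e_i^2=e_{n+1}+\lambda_{ii}e_n$ from the non-degeneracy of $F|_{W'}$ goes through with it; the point is only that the last row and column of $M(F)$ can be completed at the end, not decreed at the start.

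A smaller instance of the same issue occurs at $n=2$: to conclude $e_1e_3\in\Ker F$ you must also check $F(1,e_1e_3)=0$, $F(e_3,e_1e_3)=0$ and, crucially, $F(e_1,e_1e_3)=-F(e_3,e_3)=-F(1,e_1^4)=0$, which is not free --- it uses $\mm^4=0$ for a four-dimensional local algebra (the paper reaches the same conclusion by first showing $e_1e_3\in W$ and then multiplying by $e_1$). Apart from these two places the proposal is essentially the paper's proof; in particular both defer the canonical form of a complex symmetric matrix under orthogonal transformations to the classical reference rather than proving it.
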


\begin{remark}
Blocks $\Lambda_k$ of size $1$ are $\begin{pmatrix}\lambda_k\\ \end{pmatrix}$.
Blocks $\Lambda_k$ of size $2$ are
		$$
			\begin{pmatrix}
				\lambda_k + \cfrac{i}{2} & \cfrac{1}{2}\\
				\cfrac{1}{2} & \lambda_k - \cfrac{i}{2}\\
			\end{pmatrix}
		$$
	\end{remark}

\begin{proof}[Proof of Proposition~\ref{prco}]	
By Lemma~\ref{kerl} we may assume that $\Ker F=\langle e_n\rangle$
and $F(1,e_{n+1})=1$, because of $F(1,a)=0$ for all $a\in W$.
Let $V$ be the linear span $\langle e_1,\ldots,e_{n-1}\rangle$.
As in Lemma~\ref{tl}, 4) one can show that the restriction of $F$ to $V$		
is non-degenerate. Thus we can assume that the matrix of $F$ has the form
   $$
 		\begin{pmatrix}
				0 			&0 			&\ldots	&0	&0 &1\\
				0 			&-1			&\ldots	&0 	&0 &*\\
				\vdots 	&\vdots			&\ddots	&\vdots	&\vdots &\vdots\\
				0 	    &0 	&\ldots &-1 		&0	&*\\
				0       &0  &\ldots         &0  &0 &*\\
        1			  &*	&\ldots			&*	&* &*\\
    \end{pmatrix}.
	 	$$
We have $1 = -F(e_i,e_i) = F(1,e_i^2) = F(1,(e_i^2)^{(n+1)}) \Rightarrow e_i^2=e_{n+1}+f_i$ for some $f_i\in W$ and every $i=1,\ldots,n-1$. We may assume that $e_{n+1}=e_1^2$.

Then $f_i = e_i^2 - e_1^2 = (e_i + e_1)(e_i - e_1)$ and, as in Lemma~\ref{tl}, 3),
we obtain $f_i = \lambda_{ii}e_n$.
	 	
Again as in Lemma~\ref{tl}, 3), we have $e_ie_j = \lambda_{ij}e_n$ for all $1\le i<j\le n-1$.

Thus multiplication on the subspace $V$ is given by the matrix $I_ne_{n+1} + \Lambda e_n$,
where $I_n$ is the identity matrix
and a symmetric matrix $\Lambda$ is defined up to adding a scalar matrix.

It is easy to check that the symmetric matrix $\Lambda=(\lambda_{ij})$ under orthogonal
transformations on $V$ transforms as the matrix of a bilinear symmetric form.
It follows from \cite[Chapter~XI,~\S 3]{Gant} that $\Lambda$ can be
transformed into the canonical block diagonal form by orthogonal transformation.
Here each block $\Lambda_k$ has the form
		$$
			\lambda_k
			\begin{pmatrix}
				1& 0& & 0\\
				0& \ddots&\ddots& \\
				& \ddots& \ddots& 0\\
				0& & 0& 1\\
			\end{pmatrix}
			+\cfrac{1}{2}
			\begin{pmatrix}
				0& 1& & 0\\
				1& \ddots&\ddots& \\
				& \ddots& \ddots& 1\\
				0& & 1& 0\\
			\end{pmatrix}
			+\cfrac{i}{2}
			\begin{pmatrix}
				0& & 1& 0\\
				& \ddots& \ddots& -1\\
				1 & \ddots& \ddots&\\
				0& -1 & & 0\\
			\end{pmatrix},
\quad \lambda_k\in\KK.
		$$
	
We claim that $e_n\mm=0$. Indeed, $F(ae_n,b) = -F(e_n,ab) = 0$
for all $a\in W$ and $b\in R$, and thus $ae_n=\alpha e_n$
for some $\alpha\in\KK$. But $a$ is nilpotent, and $\alpha=0$.
Finally, we have $e_ne_{n+1} = e_ne_1^2 = 0$.
	
Further,
	\begin {equation}
		\label {th2eq1}
		F(e_{n+1}a,1)=-F(e_{n+1},a)=-F(e_1^2,a)=-F(1,e_1^2a)=-F(1,e_{n+1}a)
\Rightarrow F(e_{n+1},a) = 0
	\end {equation}
for all	$a\in W$.
	\begin {enumerate}
	\item Let $n\ge 3$. We claim that $e_{n+1}\mm=0$.
	  Indeed, for $1\le i\ne j\le n-1$ we have
		$$
			e_{n+1}e_i = (e_j^2 - \lambda_{jj}e_n)e_i = \lambda_{ij}e_je_n -\lambda_{jj}e_ne_i = 0.
		$$
In this case the algebra $R$ is isomorphic to
$$
\KK[e_1,\ldots,e_n]/(e_ie_j - \lambda_{ij}e_n, e_i^2-e_j^2-(\lambda_{ii}-\lambda_{jj})e_n,
e_se_n, \, 1\le i<j\le n-1, 1\le s\le n).
$$
	\item Let $n = 2$.
		We have $e_3^2 = e_1^4 \in \mathfrak m^4 = 0 \Rightarrow e_3^2 = 0$.
		Since $F(e_1e_3, 1) = -F(e_3, e_1) = 0$, it follows that $e_1e_3 \in W$.
		Thus $e_1e_3 = \alpha e_1 + \beta e_2$ and
		we have
		$$
			0 = e_1^4 = (e_1e_3)e_1 = \alpha e_1^2 + \beta e_1e_2 = \alpha e_3 \Rightarrow \alpha =0.
		$$
If $\beta = 0$, then $R\cong\KK[e_1,e_2]/(e_1^3, e_1e_2,e_2^2)$.		
If $\beta \ne 0$, then we may assume that $\beta = 1$, and
$R \simeq \KK[e_1]/(e_1^4)$ with $e_2=e_1^3, e_3=e_1^2$.
	\end{enumerate}
	
In all cases $e_{n+1}^2 = e_1^2e_{n+1} = 0$, and it follows that
$F(e_{n+1}, e_{n+1}) = F(1, e_{n+1}^2) = 0$.
Combining this with (\ref{th2eq1}), we obtain
	$$
		M(F) =
\begin{pmatrix}
				0 			&0 			&\ldots	&0	&0 &1\\
				0 			&-1			&\ldots	&0 	&0 &0\\
				\vdots 	&\vdots			&\ddots	&\vdots	&\vdots &\vdots\\
				0 	    &0 	&\ldots &-1 		&0	&0\\
				0       &0  &\ldots         &0  &0 &0\\
        1			  &0	&\ldots			&0	&0 &0\\
\end{pmatrix}.
	$$
Proposition~\ref{prco} is proved.
\end{proof}
	
\begin{remark}
\label {th3rm1}
The normal form of a symmetric matrix $\Lambda$ is unique up to permutation of blocks.
Indeed, we conjugate the matrix $\Lambda$ by the symmetric block diagonal matrix $T$ such that each block $T_k$ is
		$$
			\frac12
			\begin{pmatrix}
			 	1 & 0 & \dots & 0 & i\\
				 0 & 1 & & i & 0\\
				 \vdots & & \ddots & & \vdots\\
				 0 & i & & 1 & 0\\
				 i & 0 & \dots & 0 &1\\
			\end{pmatrix},
		$$
and obtain the Jordan normal form of $\Lambda$ with the same block sizes and the same eigenvalues.

We claim that the matrix $\Lambda$ defining a triple $(R,W,F)$ is unique up to
permutation of blocks, scalar multiplication, and adding a scalar matrix.
To see this, let two matrices $\Lambda, \Lambda'$  define the same triple $(R,W,F)$.
Notice that adding a scalar matrix to $\Lambda$ we do not change the defining relations of $R$.
Denote by $\phi$ an automorphism of $R$ such that $W = \phi (W)$ and
$$
			F = \phi^{-1T}F\phi^{-1}.
$$
It yields $\Ker F = \phi(\Ker F)$ and
$\phi(e_n) = \alpha e_n$. Multiplying the matrix $\Lambda'$ by $\alpha^{-1}$
we obtain $\phi (e_n) = e_n$. Moreover, $\phi$ induces on $W/\Ker F$ an
orthogonal transformation, and thus two canonical forms of the matrix $\Lambda$
can differ only by the order of blocks.
\end{remark}

\begin{example}
Two cases in Proposition~\ref{prco}, 2, correspond to two non-equivalent actions of
$\GG_a^2$ on the quadric $2x_0x_3-x_1^2=0$ in $\PP^3$, namely,
$$
(a_1,a_2)\cdot[x_0:x_1:x_2:x_3]=[x_0:x_1+a_1x_0:x_2+a_2x_0:x_3+\frac{a_1^2}{2}x_0+a_1x_1]
$$
and
$$
(a_1,a_2)\cdot[x_0:x_1:x_2:x_3]=[x_0:x_1+a_1x_0:
x_2+\left(a_2+\frac{a_1^3}{6}\right)x_0+\frac{a_1^2}{2}x_1+a_1x_3:
x_3+\frac{a_1^2}{2}x_0+a_1x_1].
$$
For the first action there is a line of fixed points, while the second
one has three orbits.
\end{example}

\begin{example}
Let $n=3$. If the matrix $\Lambda$ is diagonal, then up to scalar addition and
multiplication we have  $\Lambda =  \begin{pmatrix} 0 & 0 \\	0 & 0\\	\end{pmatrix}$
or $\Lambda =  \begin{pmatrix} 0 & 0 \\	0 & 1\\	\end{pmatrix}$. With non-diagonal
$\Lambda$ we have $\begin{pmatrix}i/2 & 1/2\\1/2 & -i/2\\ \end{pmatrix}$.
So there are three equivalence classes of additive actions in this case,
and they can be easily written down explicitly.
\end{example}

\begin{example} Consider the case $n=4$. We have six types of the matrix $\Lambda$ with one
depending on a parameter. Namely, in the diagonal matrix
$\Lambda = \text{diag} (0, 1, t)$, where $t \in \KK\setminus \{0, 1\}$, the parameter $t$
is defined up to transformations
$
\{t, \frac{1}{t}, 1-t, \frac{t-1}{t}, \frac{t}{t-1}, \frac{1}{1-t}\}
$.
Therefore, the parameters $t$ and $t'$ determine equivalent actions if and only if
$$
\frac{(t^2 - t + 1)^3}{t^2(1-t)^2} = \frac {(t'^2 - t' + 1)^3}{t'^2(1-t')^2}.
$$
The action of $\GG_a^4$ on the quadric $2x_0x_5-x_1^2-x_2^2-x_3^2=0$ in this case has the form
\begin{multline*}
(a_1,a_2,a_3,a_4)\cdot[x_0:x_1:x_2:x_3:x_4:x_5]=
[x_0 : x_1+a_1x_0 : x_2 + a_2x_0 : x_3 + a_3x_0 : \\
: x_4 + \frac{2a_4 + a_2^2 + ta_3^2}{2}x_0 + a_2x_2 + ta_3x_3:
x_5 + \frac{a_1^2 + a_2^2 + a_3^2}{2}x_0 + a_1x_1 + a_2x_2 + a_3x_3].
\end{multline*}
This agrees with the results of \cite[Section~4]{ASh}.
\end{example}

%%%%%%%%%%%%%%%%%%%%%%%%%%%%%%%%%%%%%%%%%%%%%%%%%%%%%%%%%%%%%%%

\section*{Acknowledgement}

The authors are grateful to Ernest B.~Vinberg for useful consultations.
Special thanks are due to the referees and the editors for careful reading,
constructive criticism and important suggestions.

%%%%%%%%%%%%%%%%%%%%%%%%%%%%%%%%%%%%%%%%%%

%

\begin{thebibliography}{}
%
\bibitem{Ar}
Ivan Arzhantsev. Flag varieties as equivariant compactifications of
$\mathbb{G}_a^n$. Proc. Amer. Math. Soc. 139 (2011), no.~3, 783--786
%
\bibitem{ASh}
Ivan Arzhantsev and Elena Sharoyko.
Hassett-Tschinkel correspondence: Modality and projective hypersurfaces.
J. Algebra 348 (2011), no.~1, 217--232
%
\bibitem{Ba}
Ivan Bazhov. Additive structures on cubic hypersurfaces. arXiv:1307.6085, 8~pages
%
\bibitem{CLT}
Antoine Chambert-Loir and Yuri Tschinkel. On the distribution of points of bounded
height on equivariant compactifications of vector groups. Invent. Math.
148 (2002), no.~2, 421-–452
%
\bibitem{DL}
Ulrich Derenthal and Daniel Loughran.
Singular del Pezzo surfaces that are equivariant compactifications.
J.~Math. Sciences 171 (2010), no.~6, 714--724
%
\bibitem{DL2}
Ulrich Derenthal and Daniel Loughran.
Equivariant compactifications of two-dimensional algebraic groups.
arXiv:1212.3518, 19 pages
%
\bibitem{Dev}
Rostislav Devyatov. Unipotent commutative group actions on flag varieties
and nilpotent multiplications. arXiv:1309.3480, 25~pages
%
\bibitem{Fe}
Evgeny Feigin. $\GG_a^M$ degeneration of flag varieties.
Selecta Math., New Ser. 18 (2012), no.~3, 513--537
%
\bibitem{Gant}
Felix Gantmacher. The theory of matrices. Transl. from Russian by K.A.~Hirsch,
Amer. Math. Society,~2000
%
\bibitem{FH}
Baohua Fu and Jun-Muk Hwang. Uniqueness of equivariant compactifications of $\CC^n$
by a Fano manifold of Picard number $1$. arXiv:1301.5486, 5 pages
%
\bibitem{Ha}
Robin Hartshorne. Algebraic Geometry.
Graduate Texts Math. 52, Springer Verlag, New York, Berlin, 1977
%
\bibitem{HT}
Brendan Hassett and Yuri Tschinkel. Geometry of equivariant compactifications of $\GG_a^n$.
Int. Math. Res. Notices 20 (1999), 1211-1230
%
\bibitem{KKLV}
Friedrich Knop, Hanspeter Kraft, Domingo Luna, and Thierry~Vust.
Local properties of algebraic group actions.
Algebraische Transformationsgruppen und Invariantentheorie,
63--75, DMV Sem., 13, Birkh\"auser, Basel, 1989
%
\bibitem{MM}
Hideyuki~Matsumura and Paul~Monsky. On the automorphisms of hypersurfaces.
J. Math. Kyoto Univ. 3 (1963), no.~3, 347--361
%
\bibitem{PV}
Vladimir Popov and Ernest Vinberg. Invariant Theory. In: Algebraic
geometry IV,  A.N.~Parshin, I.R.~Shafarevich (Editors), Berlin,
Heidelberg, New York: Springer-Verlag, 1994
%
\bibitem{Sh}
Elena Sharoiko. Hassett-Tschinkel correspondence and automorphisms of a quadric.
Sbornik Math. 200 (2009), no.~11, 1715--1729
%
\bibitem{Ti}
Dmitri Timashev. Homogeneous spaces and equivariant embeddings.
Encyclopaedia Math. Sciences 138, Springer-Verlag
Berlin, Heidelberg, 2011
%
\end{thebibliography}
\end{document}